\newcommand{\Ad}{\operatorname{Ad}}
\newcommand{\Lie}{\operatorname{Lie}}
\newcommand{\C}{\mathbb{C}}
\newcommand{\Hom}{\operatorname{Hom}}
\newcommand{\Span}{\operatorname{Span}}
\newtheorem{theorem}{Theorem}[section]
\newtheorem{lemma}[theorem]{Lemma}
\newtheorem{proposition}[theorem]{Proposition}%[subsection]
\theoremstyle{remark}
\newtheorem{remark}[theorem]{Remark}%[subsection]
\newcommand{\pr}{\operatorname{pr}}
\DeclareMathOperator{\diag}{diag}
\DeclareMathOperator{\Ind}{Ind}
\DeclareMathOperator{\GL}{GL}
\DeclareMathOperator{\SO}{SO}
\DeclareMathOperator{\GSpin}{GSpin}
\DeclareMathOperator{\GPin}{GPin}
\DeclareMathOperator{\Orth}{O}
\begin{document}

\title{Uniqueness of Bessel models for GSpin groups}

%\thanks{This work was supported by}

\author{Pan Yan}
\address{Department of Mathematics, The University of Arizona, Tucson, AZ 85721, USA}
\email{panyan@arizona.edu}

\date{\today}

\subjclass[2020]{Primary 11F70, 22E50; Secondary 22E45, 20G05}
\keywords{Bessel models, multiplicity one theorem, local Gan-Gross-Prasad conjecture}

\begin{abstract}
We prove the uniqueness of general Bessel models for $\mathrm{GSpin}$ groups over a local field of characteristic zero. The proof is to reduce it to the spherical case, which has been proved by Emory and Takeda \cite{EmoryTakeda2023} in the non-archimedean case and by Emory, Kim, and Maiti \cite{EmoryKimMaiti} in the archimedean case.
\end{abstract}

\maketitle

\goodbreak 

\tableofcontents

\goodbreak

\section{Introduction}

Let $V$ be a non-degenerate quadratic space over a local field $F$ of characteristic 0. 
Let $V_0\subset V$ be a non-degenerate subspace such that the orthogonal complement $V_0^\perp$ is split and has dimension $2r+1$ for a non-negative integer $r$. 
Denote by $\GSpin(V)$ and $\GSpin(V_0)$ the corresponding $\GSpin$ groups.
We have an embedding $\GSpin(V_0)\subseteq \GSpin(V)$, and their centers share the same connected component. Associated to the pair $(V_0, V)$, we define the $r$-th Bessel subgroup of $G=\GSpin(V)\times \GSpin(V_0)$ by
\begin{equation*}
	H=N_{Q_r} \rtimes \GSpin(V_0),
\end{equation*}
where $N_{Q_r}$ is the unipotent radical of the parabolic subgroup of $\GSpin(V)$ stabilizing a complete flag of isotropic subspaces determined by $V_0^{\perp}$. 
Let $\xi$ be a generic character of $H$, as defined in Section~\ref{subsection-Bessel-subgroup}. The goal of this paper is to establish the following result on the uniqueness of local Bessel models for $G$.
\begin{theorem}
\label{thm-main}
Let $V, V_0, H, G, \xi$ be as above. 
For any irreducible admissible representation $\pi$ of $\GSpin(V)$ and $\pi_0$ of $\GSpin(V_0)$, which are of Casselman-Wallach type if $F$ is archimedean, the following inequality holds:
\begin{equation*}
\dim_{\C} \Hom_{H}(\pi \otimes \pi_0, \xi)\le 1 .
\end{equation*}
\end{theorem}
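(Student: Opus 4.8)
The plan is to reduce the uniqueness of general Bessel models to the already–established spherical (i.e.\ $r=0$, $V_0$ of codimension $1$) case by an explicit unfolding of the Hom space along the unipotent group $N_{Q_r}$. Concretely, one chooses a complete flag of isotropic subspaces $\ell_1\subset\ell_1\oplus\ell_2\subset\cdots$ inside $V_0^\perp$, so that $V$ decomposes as $V_0\oplus\langle$ hyperbolic planes $\rangle\oplus\langle$ anisotropic line $\rangle$ when $V_0^\perp$ is odd-dimensional; I would set up the corresponding parabolic $Q_r$ of $\GSpin(V)$ with unipotent radical $N_{Q_r}$, and realize $H=N_{Q_r}\rtimes\GSpin(V_0)$ acting on $\pi\otimes\pi_0$ through the generic character $\xi$. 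The key point is that the restriction functor from $\GSpin(V)$-representations to representations of the mirabolic-type subgroup $N_{Q_r}\rtimes\GSpin(V_1)$, where $V_1=V_0\oplus($ one hyperbolic plane $)$, is well understood: applying a Bernstein--Zelevinsky / derivative-style filtration (in the non-archimedean case) or its Casselman--Wallach analogue (in the archimedean case, using the results on Schwartz homologies) peels off one hyperbolic plane at a time and shows that
\begin{equation*}
\Hom_{N_{Q_r}\rtimes\GSpin(V_0)}(\pi\otimes\pi_0,\xi)\hookrightarrow \Hom_{N_{Q_{r-1}}\rtimes\GSpin(V_0)}(\pi'\otimes\pi_0,\xi')
\end{equation*}
for a suitable twisted Jacquet module / derivative $\pi'$ of $\pi$ along the last step of the flag. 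Iterating $r$ times reduces to the case $r=0$.

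After this reduction one is left with the basic rank-one situation $V_0^\perp$ a line (the Fourier--Jacobi/Bessel ``spherical'' case), and there I would simply invoke the theorems of Emory--Takeda \cite{EmoryTakeda2023} in the non-archimedean case and Emory--Kim--Maiti \cite{EmoryKimMaiti} in the archimedean case, which assert exactly that $\dim_\C\Hom_{\GSpin(V_0)}(\sigma\otimes\sigma_0,\triv)\le 1$ for irreducible admissible (Casselman--Wallach) representations. To make the inductive passage legitimate one must check two compatibilities: first, that the generic character $\xi$ restricted at each stage remains generic in the sense needed (this is a direct computation with the adjoint action of $\GSpin(V_0)$ on the successive quotients of $N_{Q_r}$, using that the defining vector of $\xi$ is in general position); and second, that the center of $\GSpin(V)$ and $\GSpin(V_0)$ sharing a connected component causes no obstruction — the similitude characters match up along the tower so that $\pi'$ is still an honest admissible representation of the smaller $\GSpin$ and not merely of a Levi.

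I expect the main obstacle to be the archimedean case of the unfolding step: showing that the ``derivative'' filtration is exact and that the relevant $\Hom$ and $\Ext$ groups between Casselman--Wallach modules behave as in the $p$-adic Bernstein--Zelevinsky theory. This requires the machinery of Schwartz inductions, Casselman--Wallach globalization, and the vanishing of certain higher Schwartz homologies (à la Aizenbud--Gourevitch--Sayag and Chen--Sun), applied to the nilpotent orbit stratification of the twisted Jacquet data. A secondary technical point, present in both cases, is to verify that at each stage the metaplectic/Heisenberg-type cocycle arising from the symplectic form on a hyperbolic plane inside $V_0^\perp$ does not actually appear — i.e.\ that for \emph{Bessel} (as opposed to Fourier--Jacobi) models the relevant unipotent is abelian modulo its action and one genuinely descends within the $\GSpin$ family rather than picking up a Weil representation; this is where the precise definition of the generic character $\xi$ from Section~\ref{subsection-Bessel-subgroup} is used. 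Once these are in place, the chain of embeddings of $\Hom$ spaces gives $\dim_\C\Hom_H(\pi\otimes\pi_0,\xi)\le\dim_\C\Hom_{\GSpin(V_0)}(\pi^{(r)}\otimes\pi_0,\triv)\le 1$, completing the proof.
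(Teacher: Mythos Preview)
Your proposal takes a genuinely different route from the paper, and it has a real gap.

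The paper does not peel hyperbolic planes off $V$ or form any derivative of $\pi$. Instead it goes \emph{upward}: it enlarges $V$ to $V'=V\oplus Ff_0$, picks a $\chi^{-1}$-generic irreducible (supercuspidal, in the non-archimedean case) representation $\sigma$ of $\GL_{r+1}$, and forms the induced representation $\pi_s'=\Ind_{P_{r+1}'}^{\GSpin(V')}(\pi_0\otimes\sigma\cdot|\det|^s)$ of $\GSpin(V')$. For each Bessel functional $\mu$ it writes down an explicit Rankin--Selberg type zeta integral $\mathcal{Z}_\mu(f,v)$ over $H\backslash\GSpin(V)$ (following \cite{JiangSunZhu2010}); after proving absolute convergence for $\Re(s)\gg0$ and nonvanishing whenever $\mu\ne0$, this integral furnishes an injection
\[
\Hom_H(\pi\otimes\pi_0,\xi)\hookrightarrow\Hom_{\GSpin(V)}(\pi_s'\otimes\pi,\C),
\]
where the right-hand side is the \emph{spherical} (corank one) model for the pair $\GSpin(V)\subset\GSpin(V')$. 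One then chooses $s$ so that $\pi_s'$ is irreducible (Lemma~\ref{lemma-irreducible}) and invokes \cite{EmoryTakeda2023,EmoryKimMaiti}. The reduction happens in a single step, with no induction on $r$ and with $\pi$ untouched.

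The gap in your approach is the irreducibility of $\pi^{(r)}$. The spherical multiplicity-one theorems you cite are stated for \emph{irreducible} admissible representations; they do not bound $\dim\Hom_{\GSpin(V_0)}(\tau\otimes\pi_0,\triv)$ for an arbitrary smooth module $\tau$. Whatever twisted Jacquet functor or ``derivative'' you apply to $\pi$ along the flag, the output is not irreducible in general: for $\GSpin$ there is no analogue of the Bernstein--Zelevinsky highest-derivative theorem, and for supercuspidal $\pi$ every ordinary Jacquet module already vanishes while the twisted coinvariants $\pi_{N_{Q_r},\xi}$ are typically infinite-dimensional over $\GSpin(V_0)$. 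Passing to a composition series only gives $\dim\Hom\le(\text{length})$, not $\le1$. So the last inequality in your chain, $\dim_\C\Hom_{\GSpin(V_0)}(\pi^{(r)}\otimes\pi_0,\triv)\le 1$, is precisely what cannot be obtained this way. The paper's device avoids this entirely: it never modifies $\pi$, and instead \emph{manufactures} an irreducible representation $\pi_s'$ on the larger group, where irreducibility is easy to arrange because of the free parameter $s$.
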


When $F$ is archimedean, the symbol $\otimes$ stands for the completed projective tensor product, and the symbol $\Hom_{H}$ stands for the space of continuous $H$-intertwining maps.

Denote by $\omega_{\pi}$ and $\omega_{\pi_0}$ the central characters of $\pi$ and $\pi_0$ respectively. Note that $F^\times$ is contained in both the centers of $\GSpin(V)$ and $\GSpin(V_0)$; see Lemma~\ref{lemma-center}. If $\omega_{\pi}|_{F^\times} \cdot \omega_{\pi_0}|_{F^\times}$ is not the trivial character of $F^\times$, then $\Hom_{H}(\pi \otimes \pi_0, \xi)=0$.    See Remark~\ref{remark-central-character-compatibility}.
In the rest of the paper, we always assume that  $\omega_{\pi}|_{F^\times} \cdot \omega_{\pi_0}|_{F^\times}$ is the trivial character of $F^\times$.

We discuss two extreme cases.
When $r=0$, the subgroup $H$ is given by $H=\GSpin(V_0)$, which is diagonally embedded into $G=\GSpin(V)\times\GSpin(V_0)$, with $\xi$ being trivial. In this case, the inequality in Theorem~\ref{thm-main} recovers the uniqueness for spherical models, proved in \cite{EmoryTakeda2023} in the non-archimedean case and in \cite{EmoryKimMaiti} in the archimedean case. 
On the other hand, when $V_0=0$, we have $\GSpin(V_0)=F^\times$, which is precisely the center of $\GSpin(V)$ where $V$ is odd-dimensional; see Lemma~\ref{lemma-center}. In this case, the inequality in Theorem~\ref{thm-main} recovers the uniqueness for Whittaker models; see \cite{CasselmanHechtMilivciv2000} when $F$ is archimedean and \cite{Shalika1974} when $F$ is non-archimedean. Hence the family of Bessel models generalizes both the Whittaker model ($V_0=0$) and the spherical model ($r=0$). 

\begin{remark}
\label{Intro-remark-dim>3}
We discuss some low-rank cases.
When $\dim(V)\le 2$, $\GSpin(V)$ is abelian, hence $\pi$ and $\pi_0$ are both one-dimensional, and hence the inequality in Theorem~\ref{thm-main} is clear. If $\dim(V)=3$, then either $\dim(V_0)=2$ or $\dim(V_0)=0$. The case $(\dim(V),\dim(V_0))=(3,2)$ is known by the uniqueness of spherical models in \cite{EmoryTakeda2023} and \cite{EmoryKimMaiti}. The case $(\dim(V), \dim(V_0))=(3,0)$ is known by the uniqueness of Whittaker models in \cite{CasselmanHechtMilivciv2000} and \cite{Shalika1974}. Therefore, Theorem~\ref{thm-main} is known when $\dim(V)\le 3$. In the rest of the paper, we will always assume that $\dim(V)>3$.
\end{remark}

For classical groups, the uniqueness of Bessel models is proved in \cite{AizenbudGourevitchRallisSchiffmann2010, Waldspurger2012MultiplicityOneSO, GanGrossPrasad2012} when $F$ is non-archimedean, and in \cite{SunZhu2012, JiangSunZhu2010} when $F$ is archimedean.

We remark that Theorem~\ref{thm-main} establishes an analogue of the result in \cite[\S 15]{GanGrossPrasad2012} and completes the first step towards the local Gan-Gross-Prasad conjecture for $\GSpin$ groups.

Another important application of Theorem~\ref{thm-main} is the Euler product factorization of certain global Rankin-Selberg integrals for $\GSpin$ groups, studied in the author's work in \cite{Yan2025GGP}. As a further application of the Rankin-Selberg integrals, certain cases of the global Gan-Gross-Prasad conjecture for $\GSpin$ groups are proved in \cite{Yan2025GGP}. More recently, Bessel models are used in the construction of twisted automorphic descent from the general linear groups to odd $\GSpin$ groups in \cite{Yan2026TwistedDescent}.

We now discuss the organization of the paper as well as the idea of the proof. In Section~\ref{section-preliminaries}, we review the structure of the $\GSpin$ group using Clifford algebra and its parabolic subgroups. In Section~\ref{section-preparation-of-proof}, as a preparation of the proof of Theorem~\ref{thm-main}, we discuss the Bessel subgroup $H$ and the character $\xi$, as well as representations of $\GSpin$ groups. In Section~\ref{subsection-local-integral} we introduce a local zeta integral $\mathcal{Z}_\mu$ associated with a Bessel functional $\mu\in \Hom_H(\pi\otimes\pi_0,\xi)$, which depends on a complex parameter $s$, following an approach in \cite{JiangSunZhu2010}. Proposition~\ref{prop-integral-converge-1} asserts that if $\mu$ is nonzero, then the local zeta integral $\mathcal{Z}_\mu$ can be made nonzero. Proposition~\ref{prop-integral-converge-2} states that $\mathcal{Z}_\mu$ converges absolutely when $\mathrm{Re}(s)$ is large enough. As a consequence $\mathcal{Z}_\mu$ defines an element in the spherical model $\Hom_{\GSpin(V)}(\pi_s^\prime\otimes\pi,\mathbb{C})$, where $\pi_s^\prime$ is defined in Section~\ref{subsection-induced-rep}. In Section~\ref{subsection-proof-prop1}, we prove Proposition~\ref{prop-integral-converge-1}. Finally, in Section~\ref{subsection-proof-of-thm}, we prove Theorem~\ref{thm-main}.

\subsection{Notation}

Throughout the paper, we assume that $F$ is a local field of characteristic zero.
All representations in this paper are complex and smooth. For a representation $\pi$, we also use $\pi$ to denote its underlying space. Over archimedean fields, by an admissible representation we mean an admissible Casselman-Wallach representation. Morphisms are continuous and induction is smooth, and $\otimes$ is the completed projective tensor product, over archimedean fields.

\subsection*{Acknowledgements}
The author would like to thank Hang Xue for many helpful discussions. The author would also like to thank Dihua Jiang for valuable discussions. 
The author thanks the referee for a careful reading and valuable suggestions on the manuscript.
The author is partially supported by an AMS-Simons Travel Grant.

\section{The $\GSpin$ group and its parabolic subgroups}
\label{section-preliminaries}

\subsection{The $\GSpin$ group}
In this section we recall how the $\GSpin$ group is realized via the Clifford algebra. Our main reference is \cite{Shimura2004}, which introduces the Clifford algebras associated to quadratic spaces over arbitrary field of characteristic different from 2. 

Let $F$ be a local field of characteristic zero. 
Let $V$ be a finite-dimensional quadratic space over $F$ equipped with a non-degenerate quadratic form $q_V$, giving rise to the groups $\Orth(V)$ and $\SO(V)$:
\begin{equation*}
\begin{split}
\Orth(V)&=\{g\in \GL_F(V): q_V(gv)=q_V(v) \text{ for every $v\in V$}\},\\
\SO(V)&=\{g\in \Orth(V):\det(g)=1\}.
\end{split}
\end{equation*}
The Clifford algebra $C(V)$ associated to $(V, q_V)$ is the quotient of the tensor algebra $T(V)=\bigoplus_{d=0}^{\infty} V^{\otimes d}$
by the two sided ideal generated by all $v\otimes v - q_V(v)$, for $v\in V$. To simplify notation we also write $v_1v_2\cdots v_k$ for $v_1\otimes v_2\otimes \cdots \otimes v_k$.

The inclusion map $V\to T(V)$ induces a canonical injection $\iota: V\to C(V)$.
Identifying $v$ with $\iota(v)$ for every $v\in V$, we view $V$ as a subspace of $C(V)$, so that $v^2=q_V(v)$.
For $v_1, v_2\in V$, we set $\langle v_1, v_2\rangle: =  q_V(v_1+v_2)-q_V(v_1)-q_V(v_2)$ the induced symmetric bilinear form, so that $\langle v, v \rangle=2q_V(v)$. Similarly, for $x, y\in C(V)$, we denote $\langle x, y\rangle=(x+y)^2-x^2-y^2$. In particular, $\langle x, y\rangle$ measures if $x$ and $y$ anti-commute in $C(V)$ since
\begin{equation}
\label{eq-CliffordAlgebra-anticommute}
\langle x, y\rangle=  (x+y)^2-x^2-y^2 = xy+yx \in C(V).
\end{equation}

Denote by \( C^\pm(V) \) the even and odd part of \( C(V) \), so that $C(V)=C^+(V)\oplus C^-(V)$. 
The Clifford algebra $C(V)$ has a canonical involution $*:C(V)\to C(V)$, induced by reversing the order of products in the tensor algebra, i.e., $(v_1v_2\cdots v_k)^*=v_k\cdots v_2v_1$ for $v_1, \cdots, v_k\in V$. The canonical involution preserves both $C^+(V)$ and $C^-(V)$.
We also define
\begin{equation*}
\alpha:C(V)\to C(V), \quad \alpha(v_{+}+v_{-})=v_{+}-v_{-}, 	
\end{equation*}
where $v_{+}\in C^{+}(V)$, and $v_{-}\in C^{-}(V)$. So $\alpha$ acts as the identity on $C^{+}(V)$ and as multiplication by $-1$ on $C^{-}(V)$, and $\alpha(v_1v_2\cdots v_k)=(-1)^k v_1 v_2\cdots v_k$ for $v_1, \cdots, v_k\in V$. 
Moreover, for all $x\in C(V)$ we define the Clifford involution
\begin{equation*}
\overline{x}=(\alpha(x))^*=\alpha(x^*)
\end{equation*}
and the Clifford norm 
\begin{equation*}
N: C(V)\to C(V), \quad x\mapsto x \overline{x}.	
\end{equation*}
It is clear that $N(\lambda x)=\lambda^2 N(x)$ for all $\lambda\in F$.

The $\GPin$ group and $\GSpin$ group of $(V, q_V)$ are defined by
\begin{equation*}
\begin{split}
\GPin(V) &=\{g\in C(V)^\times: \alpha(g) V g^{-1}=V\},	\\
\GSpin(V) &=\GPin(V)\cap C^{+}(V).
\end{split}
\end{equation*}
These are also called the Clifford group and the even Clifford group in the literature. 
Since $\alpha$ acts as identity on $C^+(V)$, we have the inclusion $\GSpin(V)\subset \GPin(V)$. In fact, we have $[\GPin(V):\GSpin(V)]=2$, and $\GSpin(V)$ is the identity component of $\GPin(V)$.

There is a natural projection map 
\begin{equation}
\label{eq-pr}
\pr:\GPin(V)\to \Orth(V)	
\end{equation} 
sending $g\in \GPin(V)$ to the map $v\mapsto \alpha(g)vg^{-1}$.  The kernel of this homomorphism consists of scalars, thus we get an exact sequence
\begin{equation*}
1\to F^\times \to \GPin(V)\to \Orth(V)\to 1.	
\end{equation*}
Moreover, $\pr^{-1}(\SO(V))=\GSpin(V)$, and we have a commutative diagram
\[
\begin{tikzcd}
1 \ar[r] & F^\times \ar[r] \ar[d, equals] & \mathrm{GSpin}(V) \ar[r] \ar[d, "\subseteq"] & \mathrm{SO}(V) \ar[r] \ar[d, "\subseteq"] & 1 \\
1 \ar[r] & F^\times \ar[r] & \mathrm{GPin}(V) \ar[r] & \mathrm{O}(V) \ar[r] & 1
\end{tikzcd}
\]
where the rows are exact. It is known (see, for example, \cite[Theorem 3.1]{Meinrenken2013}) that the elements of $\GPin(V)$ are all products
\begin{equation}
\label{eq-GPin-homogeneous}
g=v_1\cdots v_k 
\end{equation}
where $v_1, \cdots, v_k\in V$ are non-isotropic. Given $g=v_1\cdots v_k\in \GPin(V)$, the corresponding element $\pr(g)\in \Orth(V)$ is a product 
\begin{equation*}
\pr(g)=r_{v_1}\cdots r_{v_k},	
\end{equation*}
where each $r_{v_i}\in \Orth(V)$ is the reflection in the hyperplane orthogonal to $Fv_i$, i.e., $r_{v_i} v_i=-v_i$ and $r_{v_i}x=x$ for every $x\in (Fv_i)^\perp$. 
In particular, $\GSpin(V)$ is given by products in \eqref{eq-GPin-homogeneous} with $k$ even.

Since $\GPin(V)$ consists of products in \eqref{eq-GPin-homogeneous}, the restriction of the Clifford norm $N$ to $\GPin(V)$ has its image in $F^\times$, thus we have a homomorphism
\begin{equation*}
N:\GPin(V)\to F^\times , \quad g\mapsto g\overline{g}.	
\end{equation*}
For any $z\in \ker(\pr)=F^\times$, we have $N(z)=z^2$. Also, for any $g\in \GPin(V)$, we have $g^{-1}=\frac{1}{N(g)}\overline{g}$. In particular, for any $g\in \GSpin(V)$, we have $g^{-1}=\frac{1}{N(g)}\overline{g}=\frac{1}{g g^*}g^*$.

We now describe the centers of $\GPin(V)$ and $\GSpin(V)$. For this purpose, take an orthogonal basis $\{b_1, \cdots, b_n\}$ of $V$ and denote $\zeta=b_1\cdots b_n$. 
\begin{lemma}\cite[Lemma 2.7]{Shimura2004}
\label{lemma-xi}
 The element $\zeta$ enjoys the following properties.
\begin{enumerate}
\item $\alpha(\zeta)v\zeta^{-1}=-v$ for all $v\in V$ and hence $\pr(\zeta)=-1$.
\item We have $\zeta^2=(-1)^{\frac{n(n-1)}{2}} b_1^2 b_2^2\cdots b_n^2$ and $\zeta^*=(-1)^{\frac{n(n-1)}{2}} \zeta$.
\item $F\zeta$ is independent of the choice of the orthogonal basis $\{b_1, \cdots, b_n\}$. 
\item $F+F\zeta$ is the center of $C(V)$ or of $C^+(V)$ according as $n$ is odd or even, respectively. 
\end{enumerate}
\end{lemma}

It is clear that $\zeta\in \GSpin(V)$ if $\dim(V)$ is even, and $\zeta\in \GPin(V)\backslash  \GSpin(V)$ if $\dim(V)$ is odd. The element $\zeta$ enjoys the following properties.

\begin{lemma}\cite[Theorem 3.6]{Shimura2004}
\label{lemma-center}
Let $Z_{\GPin(V)}$ and $Z_{\GSpin(V)}$ denote the centers of $\GPin(V)$ and $\GSpin(V)$ respectively. 	
\begin{enumerate}
\item If $\dim(V)$ is even and $\dim(V)>2$, then 
\begin{equation*}
Z_{\GPin(V)}=F^\times, \quad Z_{\GSpin(V)}=F^\times \cup F^\times \zeta.	
\end{equation*}
If $\dim(V)=2$, then 
\begin{equation*}
	Z_{\GPin(V)}=F^\times, \quad Z_{\GSpin(V)}=\GSpin(V).
\end{equation*}

\item If $\dim(V)$ is odd, then 
\begin{equation*}
Z_{\GPin(V)}=F^\times \cup F^\times \zeta, \quad Z_{\GSpin(V)}=F^\times. 
\end{equation*}
\end{enumerate}
In particular, $\ker(\pr)$ is the connected component of the center of $\GPin(V)$ as well as $\GSpin(V)$. 
\end{lemma}

Note that if $W\subset V$ is a non-degenerate subspace, then we have the inclusions of Clifford algebra $C(W)\subset C(V)$ and of even Clifford algebra $C^+(W)\subset C^+(V)$, which map $F^\times$ onto itself. These
induce the inclusions
 $\GPin(W)\subset \GPin(V)$ and $\GSpin(W)\subset \GSpin(V)$.

Now let $V=V_1\oplus V_2$ be an orthogonal sum decomposition, where both $V_1, V_2$ are non-degenerate. If $g_1=v_1\cdots v_k\in \GPin(V_1)$ and $g_2=w_1\cdots w_m\in \GPin(V_2)$, by using \eqref{eq-CliffordAlgebra-anticommute} we see that 
\begin{equation*}
g_1 g_2 g_1^{-1}=(-1)^{km} g_2.	
\end{equation*}
In particular, if $g_1\in \GSpin(V_1)$ and $g_2\in \GSpin(V_2)$, then $g_1g_2=g_2g_1$. Thus we can form the direct product $\GSpin(V_1)\times \GSpin(V_2)$. We have a homomorphism $\GSpin(V_1)\times \GSpin(V_2)\to \GSpin(V)$ given by $(g_1, g_2)\mapsto g_1g_2$ for $g_1\in \GSpin(V_1), g_2\in \GSpin(V_2)$, which induces an injection
\begin{equation*}
(\GSpin(V_1)\times \GSpin(V_2))/ \{(z, z^{-1}): z\in \GL_1\} \to \GSpin(V).	
\end{equation*}
Moreover, the diagram
\[
\begin{tikzcd}
& \GSpin(V_1)\times \GSpin(V_2) \ar[r] \arrow[d, "\pr\times \pr"] & \mathrm{GSpin}(V) \ar[d, "\pr"]  \\
 & \SO(V_1)\times \SO(V_2) \arrow[r, "\iota_{V_1,V_2}"] & \SO(V) 
\end{tikzcd}
\]
is commutative, where $\iota_{V_1,V_2}$ is the block diagonal embedding.

\subsection{Parabolic subgroups}
In this subsection we recall an explicit realization of parabolic subgroups of $\GSpin(V)$ in terms of Clifford algebra from \cite{Pollack2018}.
Let $X\subset V$ be an isotropic subspace, and we consider the parabolic subgroup $P_X$ of $\GSpin(V)$ that stabilizes $X$, namely
\begin{equation*}
P_X=\{ g\in \GSpin(V): g X g^{-1}=X\}. 	
\end{equation*}
The unipotent radical $N_X$ of $P_X$ has a two-step filtration $N_X\supset N_X^\prime\supset 1$. 
Inside the Clifford algebra, its Lie algebra is given by $X^\perp \cdot X$, and the exponential map identifies this nilpotent Lie algebra with $N_X$. Moreover, this filtration identifies
\[
N_X'\cong \wedge^2 X,
\qquad
N_X/N_X'\cong (X^\perp/X)\otimes X.
\]
If $X$ is one-dimensional, then $N_X^\prime=1$, while if $\dim(V)$ is even and $\dim(X)=\dim(V)/2$, then $X^\perp=X$ and hence $N_X=N_X^\prime$. In all other  cases, we have both $N_X/N_X^\prime$ and $N_X^\prime$ are nontrivial. 
The image of $P_X$ under $\pr$, $\pr(P_X)$, is the corresponding parabolic subgroup of $\SO(V)$.

Associated to $X$, one can define an increasing filtration $\mathcal{W}^{X}_{\bullet}$ on $C(V)$ as follows. First we define an increasing filtration on $V$ by
\begin{equation*}
\mathcal{W}_{k}^{X}(V):=\begin{cases}
0 & \text{ if }k<-1,\\
X & \text{ if }k=-1,\\
X^\perp & \text{ if }k=0,\\
V &\text{ if }k\ge 1.
\end{cases}
\end{equation*}
Next we define
\begin{equation*}
\mathcal{W}_{k}^{X}(C(V)):=\{x\in C(V): x=\sum x_1 x_2 \cdots x_m, x_i\in \mathcal{W}_{\alpha_i}^{X}(V), \alpha_1+\alpha_2+\cdots \alpha_m\le k \}.
\end{equation*}

\begin{lemma}
\cite[Lemma 2.3]{Pollack2018}
The parabolic subgroup $P_X$ and its unipotent radical $N_X$ are given by
\begin{equation*}
P_X = 	\mathcal{W}_{0}^{X}(C(V))\cap \GSpin(V), \quad N_X=(1+ \mathcal{W}_{-1}^{X}(C(V))) \cap \GSpin(V).
\end{equation*}
\label{lemma-parabolic-via-Clifford}
\end{lemma}

Now suppose $Y$ is an isotropic subspace of $V$ dual to $X$, and we take an orthogonal sum decomposition $V=X\oplus W\oplus Y$. 
Let $M_X$ be the Levi subgroup of the parabolic subgroup $P_X$ of $\GSpin(V)$. 
Then the image in $\SO(V)$ of $M_X$ is identified with $\GL_F(X)\times \SO(W)$. 
To describe the lifting of the $\GL_F(X)$ factor to $\GSpin(V)$ explicitly, we take  $X=\Span\{e_1, \cdots, e_m\}$ and $Y=\Span\{e_{-m}, \cdots, e_{-1}\}$ such that $\langle e_i, e_{-j}\rangle =\delta_{i,j}$ for $1\le i, j\le m$. Let $\{w_1, \cdots, w_{\dim W}\}$ be a basis of $W$. Then we have the following basis of $V$:
\begin{equation}
\label{eq-V-basis}
	\{e_1, \cdots, e_m, w_1, \cdots, w_{\dim W}, e_{-m}, \cdots, e_{-1}\}.
\end{equation}
Let $J_m$ denote the $m\times m$ anti-diagonal matrix with entries 1 on the anti-diagonal and 0 everywhere else. 
\begin{lemma}
\cite[Proposition 4.8]{Shimura2004}
\label{lemma-Levi-lifting}
There exists an injective algebraic group homomorphism 
\begin{equation*}
	\ell_m:\GL_F(X)=\GL_m(F)\to P_X
\end{equation*}
with the following properties:
\begin{enumerate}
\item For any $g\in \GL_m(F)$, we have $\pr(\ell_m(g))=\begin{pmatrix} g & & \\ &I_W &\\ & & J_m {}^tg^{-1} J_m \end{pmatrix}\in \SO(V)$, where the matrix is written with respect to the ordered basis \eqref{eq-V-basis}.
\item For any $g\in \GL_m(F)$, we have $N(\ell_m(g))=\det(g)$. 
\item For any diagonal element $\diag(a_1, \cdots, a_m)\in \GL_m(F)$, we have $\ell_m(\diag(a_1, \cdots, a_m))=\prod_{i=1}^m (a_i e_ie_{-i}+e_{-i} e_i)$. 
\item If $1\le m^\prime <m$ and $g_0\in \GL_F(\Span\{e_1, \cdots, e_{m^\prime} \} )=\GL_{m^\prime}(F)$, let $g\in \GL_m(F)$ be the element such that $g$ acts as $g_0$ on $\Span\{e_1, \cdots, e_{m^\prime} \}$ and acts trivially on $\Span\{e_{m^\prime+1}, \cdots, e_{m} \}$. Then $\ell_m(g)=\ell_{m^\prime}(g_0)$.
\item The image $\ell_m(\GL_m(F))$ is an algebraic subgroup of $P_X$.
\end{enumerate}
Moreover, $\ell_m$ is uniquely determined, as an algebraic group homomorphism, by properties (1)-(3).
\end{lemma}

\begin{lemma}
\label{lemma-Levi-as-product}
Let $\ell_m$ be the map in Lemma~\ref{lemma-Levi-lifting}. Then $\ell_m(\GL_m(F))\cap \GSpin(W)=\{1\}$.	
\end{lemma}

\begin{proof}
Let $g\in \GL_m(F)$ and assume that $\ell_m(g)\in \GSpin(W)$. Since $\GSpin(W)$ acts on $X$ trivially, $\pr(\ell_m(g))$ acts on $X$ trivially. 	On the other hand, by Lemma~\ref{lemma-Levi-lifting}, $\pr(\ell_m(g))$ acts $X$ by $g$. Thus $g=I_m$ and $\ell_m(g)=1$.
\end{proof}

Let $Z_m\subset \GL_m(F)$ be the unipotent radical of the Borel subgroup $B_m=T_m Z_m$ of $\GL_m(F)$ stabilizing the flag
\begin{equation*}
\Span\{e_1\} \subset \Span\{e_1, e_2\} \subset \cdots \subset \Span\{e_1, e_2, \cdots, e_m\}.	
\end{equation*}
In the next lemma, we describe the lifting of the unipotent group $Z_m$.

\begin{lemma}
\label{lemma-Levi-Unipotent-lifting}
For $1\le i <j\le m$, let $E_{i,j}$ denote the $m\times m$ matrix with one in the $(i,j)$-th entry and zeros everywhere else, and let $u_{i,j}(\gamma)=I_m + \gamma  E_{i,j}$ be the unipotent matrix in $\GL_m(F)$ where $\gamma \in F$. Then $\ell_m(u_{i,j}(\gamma))=1+\gamma  e_i e_{-j}$. 
\end{lemma}

\begin{proof}
Set $x=\gamma e_i e_{-j}$.
Since $i\neq j$, we have $\langle e_i,e_{-j}\rangle=e_i e_{-j}+e_{-j} e_i=0$, and hence $(e_i e_{-j})^2=e_i (-e_i e_{-j})e_{-j}=0$. Therefore $(1+x)^{-1}=1-x.$ By the uniqueness of $\ell_m$ in Lemma~\ref{lemma-Levi-lifting}, it suffices to check that $1+x$ agrees with $u_{i,j}(\gamma)$ on properties (1)-(3) of Lemma~\ref{lemma-Levi-lifting}.

We first compute the action of $1+x$ on $V$ by conjugation. For $e_j$, using
$e_{-j} e_j=1-e_je_{-j}$, we get
\begin{equation*}
\begin{split}
(1+x)e_j(1-x)
=e_j+x e_j-e_jx-xe_jx  = e_j+\gamma e_i e_{-j} e_j-\gamma e_j e_i e_{-j}-\gamma^2 e_i e_{-j} e_j e_i e_{-j}.
\end{split}
\end{equation*}
Now $e_i e_{-j} e_j=e_i(1-e_je_{-j})=e_i-e_i e_j e_{-j}$, and $e_j e_i e_{-j}=-e_i e_j e_{-j}.$
Moreover, the last term is zero because $e_i e_{-j} e_je_i=-e_{-j} e_ie_j e_i=e_{-j} e_j e_i^2=0$. Hence $(1+x)e_j(1-x)=e_j+\gamma e_i.$
For $k\neq j$, a direct calculation gives $(1+x)e_k(1-x)=e_k.$
Thus the action of $1+x$ on $X=\Span\{e_1,\dots,e_m\}$ is given by
\[
e_j\mapsto e_j+\gamma e_i,\qquad e_k\mapsto e_k\quad \text{ for $k\neq j$}.
\]
 
Next, since every $w\in W$ is orthogonal to both $e_i$ and $e_{-j}$, it commutes
with the product $e_ie_{-j}$. Hence
\[
(1+x)w(1-x)=w.
\]

On the $Y$-part, one checks similarly that the action of $1+x$ is given by
\begin{equation*}
e_{-i}\mapsto e_{-i}-\gamma e_{-j}, \quad e_{-k}\mapsto e_{-k}  \quad \text{ for $k\neq i$}.
\end{equation*}
We conclude that the action of $1+x$ on $V$ is given by
\[
\begin{pmatrix}
u_{i,j}(\gamma) & & \\
& I_W & \\
& & J_m{}^tu_{i,j}(\gamma)^{-1}J_m
\end{pmatrix}.
\]

We now compute the Clifford norm of $1+x$. Since
$(e_ie_{-j})^*=e_{-j} e_i=-e_i e_{-j},$
we have $
(1+\gamma e_ie_{-j})^*=1-\gamma e_ie_{-j}.$
Thus
\[
N(1+\gamma e_ie_{-j})
=
(1+\gamma e_ie_{-j})(1-\gamma e_ie_{-j})
=
1-\gamma^2(e_ie_{-j})^2
=
1.
\]
This agrees with $\det(u_{i,j}(\gamma))=1.$

Finally, the unipotent matrix $u_{i,j}(\gamma)$ is diagonal if and only if $\gamma =0$, reducing it to the identity matrix $I_m=\diag(1, \cdots, 1)$, which agrees with $\prod_{i=1}^m (e_i e_{-i}+e_{-i} e_i)=1$. 

By the uniqueness of the lift $\ell_m$ in Lemma~\ref{lemma-Levi-lifting}, we obtain $\ell_m(u_{i,j}(\gamma))=1+\gamma e_i e_{-j}$.
\end{proof}

\begin{lemma}
\label{lemma-Levi-GL-GSpin-commute}
Let $\ell_m$ be as in Lemma~\ref{lemma-Levi-lifting}. Then $\GSpin(W)$ commutes with $\ell_m(\GL_F(X))$.	
\end{lemma}

\begin{proof}
Note that every $w\in W$ anti-commutes with each of the isotropic basis vectors $e_1, \cdots, e_m$, $e_{-m}, \cdots, e_{-1}$, so every $w\in W$ commutes with every quadratic monomial in $e_1, \cdots, e_m$, $e_{-m}, \cdots, e_{-1}$. By Lemma~\ref{lemma-Levi-lifting} (3), every $w\in W$ commutes with the lifts of diagonal matrices $\ell_m(\diag(a_1, \cdots, a_m))$. By Lemma~\ref{lemma-Levi-Unipotent-lifting}, every $w\in W$ commutes with the lifts of upper triangular matrices $I_m+\gamma E_{i,j}$ with $1\le i<j\le m$. By a similar computation as in Lemma~\ref{lemma-Levi-Unipotent-lifting} for the lifts of lower triangular unipotent matrices, every $w\in W$ commutes with the lifts of lower triangular matrices $I_m+\gamma E_{i,j}$ with $1\le j<i\le m$. Hence every $w\in W$ commutes with $\ell_m(\GL_F(X))$. Every element in $\GSpin(W)$ can be written as a product $h=w_1 \cdots w_k$ where $w_1, \cdots, w_k\in W$ are non-isotropic and $k$ is even; see \eqref{eq-GPin-homogeneous}. Hence $\GSpin(W)$ commutes with $\ell_m(\GL_F(X))$.
\end{proof}

\section{Preparation of the proof}
\label{section-preparation-of-proof}
\subsection{The Bessel subgroup $H$ and the character $\xi$}
\label{subsection-Bessel-subgroup}

Let $V$ be a non-degenerate quadratic space over $F$, and assume that
$\dim(V)>3$; see Remark~\ref{Intro-remark-dim>3}. Let $V_0\subset V$ be a non-degenerate subspace such that $V_0^\perp$ is split of dimension $2r+1$.
Write $V_0^\perp=X_r\oplus Y_r\oplus Fe_0,$
where $X_r$ and $Y_r$ are maximal isotropic subspaces of $V_0^\perp$ in duality,
and where $e_0$ is non-isotropic. Set $V_0'=V_0\oplus Fe_0$.
Thus $V=X_r\oplus V_0'\oplus Y_r.$

Let $P_r=N_{P_r}\rtimes M_{P_r}$ be the parabolic subgroup of $\GSpin(V)$
stabilizing $X_r$, namely
\[
P_r=\{g\in \GSpin(V):gX_rg^{-1}=X_r\}.
\]
Then the Levi subgroup $M_{P_r}=\ell_r(\GL_F(X_r))\times \GSpin(V_0^\prime)\cong   \GL_F(X_r) \times \GSpin(V_0^\prime)$, where we recall $\ell_r$ is the map from Lemma~\ref{lemma-Levi-lifting}. The unipotent radical $N_{P_r}$ has a two-step filtration $N_{P_r}\supset N_{P_r}'\supset 1,$ where $N_{P_r}'\cong \wedge^2 X_r$ and $N_{P_r}/N_{P_r}'\cong X_r\otimes V_0'.$ We have a short exact sequence
\[
\begin{tikzcd}
1 \ar[r] &
N_{P_r}^\prime \ar[r] &
N_{P_r} \ar[r] &
N_{P_r}/ N_{P_r}^\prime \ar[r] &
1.
\end{tikzcd}
\]

Let $l_{X_r}:X_r\to F$ be a non-zero $F$-linear homomorphism, and let
$l_{V_0}:V_0'\to F$ be a non-zero $F$-linear homomorphism which vanishes on $V_0$. Then $l_{V_0}$ is supported on the line $Fe_0$. We obtain a linear map $l_{X_r}\otimes l_{V_0}:X_r\otimes V_0'\longrightarrow F.$ Explicitly, we have
\begin{equation*}
	(l_{X_r}\otimes l_{V_0})(x\otimes (v+c e_0))=c\cdot  l_{X_r}(x) l_{V_0}(e_0)\quad \text{ for }x\in X_r, v\in V_0, c\in F.
\end{equation*}
By composing with the natural quotient map, we define a map $l_{N_{P_r}}:N_{P_r}\longrightarrow F$ by
\[
\begin{tikzcd}
l_{N_{P_r}}:N_{P_r} \ar[r] &
N_{P_r}/N_{P_r}' \ar[r, "\sim"] &
X_r\otimes V_0' \ar[r, "l_{X_r}\otimes l_{V_0}"] &
F.
\end{tikzcd}
\]

Let $Z_{X_r}$ be a maximal unipotent subgroup of $\GL_F(X_r)$ stabilizing
$l_{X_r}$. Then the subgroup $Z_{X_r}\times \GSpin(V_0)$
of the standard Levi subgroup $M_{P_r}$ fixes $l_{N_{P_r}}$. Indeed, $Z_{X_r}$ fixes
$l_{X_r}$ by definition, and $\GSpin(V_0)$ acts trivially on $Fe_0$.

Set
\[
N=N_{P_r}\rtimes Z_{X_r}\  \subset \GSpin(V).
\]
We define the $r$-th Bessel subgroup $H$ of $\GSpin(V)$ by
\[
H= N\rtimes \GSpin(V_0).
\]
We extend $l_{N_{P_r}}$ to $N=N_{P_r}\rtimes Z_{X_r}$ by making it trivial on
$Z_{X_r}$. Let $\psi$ be a non-trivial additive unitary character of $F$, and
let
\[
\lambda_{X_r}:Z_{X_r}\to \mathbb{S}^1
\]
be a generic unitary character. We define a character $\xi$ of $H$ by
\[
\xi(nzh)=\lambda_{X_r}(z)\psi(l_{N_{P_r}}(n)),
\]
for $z\in Z_{X_r}$, $n\in N_{P_r}$, and $h\in \GSpin(V_0)$. This is well-defined
because $Z_{X_r}$ and $\GSpin(V_0)$ fix $l_{N_{P_r}}$. The pair $(H,\xi)$ is uniquely determined, up to conjugacy in
$G=\GSpin(V)\times \GSpin(V_0)$, by the pair $V_0\subset V$.

We now give a more explicit description of $(H,\xi)$. Let
$\{e_1,\dots,e_r\}$ be a basis of $X_r$, and let
$\{e_{-r},\dots,e_{-1}\}$ be the dual basis of $Y_r$, with $\langle e_i,e_{-j}\rangle=\delta_{ij}$ for $1\le i,j\le r.$
Thus, in the Clifford algebra, we have 
\begin{equation}
\label{eq-relation-eiej}
e_i^2=e_{-i}^2=0,\qquad
e_i e_{-i}+e_{-i}e_i=1,\qquad
e_i e_{-j}=-e_{-j}e_i
\end{equation}
for $1\le i,j\le r$ with $i\neq j$.

Let $Q_r=N_{Q_r}\rtimes M_{Q_r}$ denote the standard parabolic subgroup of
$\GSpin(V)$ stabilizing the flag
\begin{equation}
\label{eq-flag-Xr}	
0=X_0\subset X_1=Fe_1\subset X_2=Fe_1\oplus Fe_2
\subset\cdots\subset X_r=Fe_1\oplus\cdots\oplus Fe_r.
\end{equation}
Then the Levi subgroup $M_{Q_r} =\ell_r((\GL_1)^r)\times \GSpin(V_0^\prime) \cong (\GL_1)^r\times \GSpin(V_0^\prime)$, where $(\GL_1)^r\subset \GL_F(X_r)$ is the diagonal torus with respect to the basis $\{e_1, \cdots, e_r\}$.
The $(\GL_1)^r$ factor is represented explicitly by 
$\prod_{i=1}^r(a_i e_i e_{-i}+e_{-i}e_i)\in \GSpin(V),$ where $a_i\in F^\times$.
It acts by
\begin{equation}
\label{eq-GL-action-on-X}
\prod_{i=1}^r(a_i e_i e_{-i}+e_{-i}e_i)  e_k \prod_{i=1}^r(a_i e_i e_{-i}+e_{-i}e_i) ^{-1}=a_k e_k,
\qquad 1\le k\le r,
\end{equation}
and
\begin{equation}
\label{eq-GL-action-on-Y}
\prod_{i=1}^r(a_i e_i e_{-i}+e_{-i}e_i) e_{-k} \prod_{i=1}^r(a_i e_i e_{-i}+e_{-i}e_i) ^{-1}=a_k^{-1}e_{-k},
\qquad 1\le k\le r.
\end{equation}
The torus element $\prod_{i=1}^r(a_i e_i e_{-i}+e_{-i}e_i)$ fixes the line $Fe_0$. Moreover, $\GSpin(V_0')$ acts trivially on $X_r\oplus Y_r$.
The unipotent radical of $Q_r$ is $N_{Q_r}=Z_r\ltimes N_{P_r},$
where $Z_r=Z_{X_r}$ is the unipotent radical of the Borel subgroup
$B_r=T_rZ_r$ of $\GL_r=\GL_F(X_r)$ stabilizing the flag \eqref{eq-flag-Xr}.
Thus, with this choice of $Z_r$, we have $N_{Q_r}=N.$

Define a map
\[
l_r:N_{Q_r}\longrightarrow F^r
\]
by
\[
l_r(u)=(x_1,\dots,x_{r-1},z),
\]
where
\begin{equation}
\label{eq-char-xi-x}
x_i=x_i(u)=\langle \pr(u)e_{i+1},e_{-i}\rangle,
\qquad 1\le i\le r-1,
\end{equation}
and
\begin{equation}
\label{eq-char-xi-z}
z=z(u)=\langle \pr(u)e_0,e_{-r}\rangle.
\end{equation}

We first observe that $l_r$ is fixed by $\GSpin(V_0)$-conjugation. Indeed, for
$h\in \GSpin(V_0)$ and $u\in N_{Q_r}$, since $\pr(h)$ fixes each $e_i$,
$e_{-i}$, and $e_0$, we have
\[
\begin{split}
\langle \pr(huh^{-1})e_{i+1},e_{-i}\rangle
&=
\langle \pr(h)\pr(u)\pr(h^{-1})e_{i+1},e_{-i}\rangle  \\
&=
\langle \pr(h)\pr(u)e_{i+1},e_{-i}\rangle  \\
&=
\langle \pr(u)e_{i+1},\pr(h^{-1})e_{-i}\rangle  \\
&=
\langle \pr(u)e_{i+1},e_{-i}\rangle.
\end{split}
\]
Similarly,
\[
\langle \pr(huh^{-1})e_0,e_{-r}\rangle
=
\langle \pr(u)e_0,e_{-r}\rangle.
\]
Hence $l_r$ is invariant under $\GSpin(V_0)$-conjugation. The conjugation action of the torus
$t=\prod_{i=1}^r(a_i e_i e_{-i}+e_{-i}e_i)$, where $a_i\in F^\times$, is given by
\[
\begin{split}
\langle \pr(tut^{-1})e_{i+1},e_{-i}\rangle
&=
\langle \pr(t)\pr(u)\pr(t^{-1})e_{i+1},e_{-i}\rangle \\
&=
a_{i+1}^{-1}
\langle \pr(t)\pr(u)e_{i+1},e_{-i}\rangle \quad (\text{by \eqref{eq-GL-action-on-X}})  \\
&= a_{i+1}^{-1}
\langle \pr(u)e_{i+1},\pr(t^{-1})e_{-i}\rangle \\
&= a_i a_{i+1}^{-1}
\langle \pr(u)e_{i+1},e_{-i}\rangle \quad (\text{by \eqref{eq-GL-action-on-Y}}).
\end{split}
\]
Similarly, since $t$ fixes $e_0$ and $t^{-1}e_{-r}t=a_r e_{-r}$, we have $\langle \pr(tut^{-1})e_0,e_{-r}\rangle=a_r \langle \pr(u) e_0,e_{-r}\rangle$.
The subgroup of $M_{Q_r}$ fixing $l_r$ is precisely $\GSpin(V_0)$.

For a non-trivial additive unitary character $\psi$ of $F$, define a character
$\xi$ of $N_{Q_r}$ by
\begin{equation}
\label{eq-xi-defn1}
\xi(u)=\psi\left(\sum_{i=1}^{r-1}x_i+z\right),
\end{equation}
where $x_i$ and $z$ are given by \eqref{eq-char-xi-x} and \eqref{eq-char-xi-z}. Since $l_r$ is invariant under $\GSpin(V_0)$-conjugation, $\xi$ is invariant
under $\GSpin(V_0)$-conjugation. Hence $\xi$ extends to a character
\[
\xi:N_{Q_r}\rtimes \GSpin(V_0)\longrightarrow \mathbb{C}^\times
\]
by $\xi(nh)=\xi(n)$ for $n\in N_{Q_r},\ h\in \GSpin(V_0).$ As noted above, up to conjugacy in
$G=\GSpin(V)\times \GSpin(V_0)$, the pair $(H, \xi)$ depends only on the pair $V_0\subset V$, and not on the choices of $\psi$, or the basis vectors $e_i$.

We now prove that the Bessel subgroup $H$ is unimodular.

\begin{lemma}
\label{lemma-H-unimodular}
The group $H$ is unimodular.
\end{lemma}

\begin{proof}
Recall that $H=N_{Q_r}\rtimes \GSpin(V_0),$ $N_{Q_r}=Z_r\ltimes N_{P_r}.$
The group $N_{Q_r}$ is unipotent, and hence is unimodular. The group $\GSpin(V_0)$
is reductive, and hence is also unimodular. Therefore it remains to check that
the conjugation action of $\GSpin(V_0)$ on $N_{Q_r}$ has determinant one.

The group $\GSpin(V_0)$ acts trivially on the factor $Z_r$, since $Z_r$ is
contained in the lifted $\GL_F(X_r)$-factor and $\GSpin(V_0)$ acts trivially on
$X_r\oplus Y_r$. Thus it suffices to analyze its action on $N_{P_r}$.

At the level of Lie algebras, the two-step filtration of $N_{P_r}$ gives $\Lie(N_{P_r})
\simeq (X_r\otimes V_0')\oplus \wedge^2 X_r$ as a vector space. The subgroup $\GSpin(V_0)$ acts trivially on $X_r$, on
$\wedge^2X_r$, and on the line $Fe_0\subset V_0'$. On the $V_0$-summand of
$V_0'=V_0\oplus Fe_0$, it acts through $\pr:\GSpin(V_0)\longrightarrow \SO(V_0).$ Hence for $h\in \GSpin(V_0)$, the determinant of its action on
$\Lie(N_{P_r})$ is $\det(\pr(h))^{\dim X_r}=1,$
because $\pr(h)\in \SO(V_0)$. Therefore the modular character of the semidirect product $N_{Q_r}\rtimes \GSpin(V_0)$ is trivial. Hence $H$ is unimodular.
\end{proof}

\subsection{The group $\GSpin(V^\prime)$}
\label{subsection-Group-GSpinVprime}
Consider $V^\prime=V\oplus F f_0$. We equip $V^\prime$ with a symmetric bilinear form $\langle \cdot, \cdot \rangle_{V^\prime}$ so that 
\begin{equation*}
\langle v_1, v_2 \rangle_{V^\prime}=\langle v_1, v_2\rangle, \quad \langle v, f_0\rangle_{V^\prime} =0, \quad \langle f_0, f_0\rangle_{V^\prime}=-\langle e_0, e_0\rangle, \quad \text{ for all }v, v_1, v_2\in V. 
\end{equation*}
Denote $e_{r+1}=e_0+f_0$, $e_{-(r+1)}=\frac{1}{2\langle e_0, e_0\rangle }(e_0-f_0)$. 
Then $V^\prime$ is the orthogonal sum 
\begin{equation*}
V^\prime= (X^\prime_{r+1}\oplus Y^\prime_{r+1}) \oplus V_0
\end{equation*}
where 
\begin{equation*}
X_{r+1}^\prime= X_r\oplus F e_{r+1}, \quad Y^\prime_{r+1}= Y_r \oplus F e_{-(r+1)}
\end{equation*}
are totally isotropic subspaces.  

Recall $V_0^\prime=V_0\oplus F e_0$, and $P_r=N_{P_r} \rtimes M_{P_r}$ is the parabolic subgroup of $\GSpin(V)$ stabilizing $X_r$, where $M_{P_r}\cong \GSpin(V_0^\prime)\times \GL_r$, $\GL_r=\GL_F(X_r)$.
Denote by $P_{r+1}^\prime= N_{P_{r+1}^\prime} \rtimes M_{P_{r+1}^\prime}$ the parabolic subgroup of $\GSpin(V^\prime)$ stabilizing $X_{r+1}^\prime$. Then we have
\begin{equation*}
\begin{split}
M_{P_{r+1}^\prime} \cong   \GSpin(V_0)\times \GL_{r+1}, \quad  \GL_{r+1}:=\GL_F(X_{r+1}^\prime) \supset \GL_r.
\end{split}	
\end{equation*}
We view $\GL_r$ as a subgroup of $\GL_{r+1}$ by acting trivially on $e_{r+1}$.
Recall that $Z_r$ is the unipotent radical  of the Borel subgroup $B_{r}=T_{r}Z_{r}$  of $\GL_F(X_{r} )$ stabilizing the flag \eqref{eq-flag-Xr}.
Let $Z_{r+1}$ be the unipotent radical of the Borel subgroup $B_{r+1}=T_{r+1}Z_{r+1}$ of $\GL(X_{r+1}^\prime)$ stabilizing the flag
\begin{equation}
\label{eq-flag-Xr+1}
 X_1 \subset X_2 \subset\cdots \subset 	  X_r \subset X_{r+1}^\prime.
\end{equation}
We observe that 
\begin{equation}
\label{eq-Bessel-subgp}
H=N_{P_r}\rtimes (\GSpin(V_0)\times Z_r)\subset P_r=	N_{P_r} \rtimes (\GSpin(V_0^\prime)\times \GL_r)
\end{equation}
and
\begin{equation*}
H \subset 	P_{r+1}^\prime\cap \GSpin(V).
\end{equation*}

Now we analyze the structure of the group $P_{r+1}^\prime\cap \GSpin(V)$. Let $P^\prime_r=N_{P_{r}^\prime} \rtimes M_{P_{r}^\prime}$ be the parabolic subgroup of $\GSpin(V^\prime)$ stabilizing $X_r$.

\begin{lemma}
We have $P_{r+1}^\prime\cap \GSpin(V) =(N_{P_{r}^\prime}\cap \GSpin(V)) \rtimes (\GL(X_r)\times \GSpin(V_0))   \subset P_r^\prime\cap \GSpin(V)$.	
\end{lemma}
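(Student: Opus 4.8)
\emph{The plan} is to argue directly in $\GSpin(V^\prime)$, using that $\GSpin(V)$ is exactly the set of $g\in\GSpin(V^\prime)$ commuting with $f_0$; equivalently, via \eqref{eq-pr}, the set of $g$ for which $\pr(g)$, acting on $V^\prime$, fixes $f_0$. This is a short Clifford-algebra check: write $g\in C^+(V^\prime)=C^+(V)\oplus C^-(V)f_0$; since $\langle f_0,f_0\rangle_{V^\prime}\ne 0$ and the characteristic is zero, commuting with $f_0$ forces the $C^-(V)f_0$-component of $g$ to vanish, so $g\in C^+(V)$, and then $gV^\prime g^{-1}=V^\prime$ together with $gf_0g^{-1}=f_0$ gives $gVg^{-1}=V$. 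Consequently $g\in\GSpin(V)$ lies in $P_{r+1}^\prime$ if and only if $\pr(g)$ preserves $X_{r+1}^\prime$, so I can run the linear algebra with $w:=\pr(g)$ on $V^\prime$ and then translate the outcome back to a condition on $g$.

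\emph{Step 1 (reduce ``$w$ preserves $X_{r+1}^\prime$'' to two conditions).} Let $g\in\GSpin(V)$ and $w=\pr(g)$, so $w$ preserves $V$ and $wf_0=f_0$. From $X_{r+1}^\prime=X_r\oplus F(e_0+f_0)$ and $V^\prime=V\oplus Ff_0$ we get $X_{r+1}^\prime\cap V=X_r$; hence if $wX_{r+1}^\prime=X_{r+1}^\prime$ then $wX_r\subseteq X_{r+1}^\prime\cap V=X_r$, i.e.\ $wX_r=X_r$, i.e.\ $g\in P_r$. Conversely, given $wX_r=X_r$, compare the $f_0$-coordinate in $w(e_0+f_0)=we_0+f_0$: the vector $we_0+f_0$ lies in $X_{r+1}^\prime$ exactly when $we_0\in e_0+X_r$. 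Thus $P_{r+1}^\prime\cap\GSpin(V)=\{\,g\in P_r:\ \pr(g)e_0\in e_0+X_r\,\}$.

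\emph{Step 2 (isolate the constraint on the Levi).} Recall $P_r=N_{P_r}\rtimes(\GL(X_r)\times\GSpin(V_0^\prime))$ with $V_0^\prime=V_0\oplus Fe_0$, and that $X_r^\perp=X_r\oplus V_0^\prime$ inside $V$, with $\GL(X_r)$ fixing $V_0^\prime$ pointwise and $N_{P_r}$ acting trivially on $X_r^\perp/X_r$. Factor $g=n\,a\,b$ with $n\in N_{P_r}$, $a\in\GL(X_r)$, $b\in\GSpin(V_0^\prime)$; then $\pr(g)e_0\equiv\pr(b)e_0\pmod{X_r}$, and as $\pr(b)e_0\in V_0^\prime$ while $V_0^\prime\cap(e_0+X_r)=\{e_0\}$, the condition $\pr(g)e_0\in e_0+X_r$ amounts to $\pr(b)e_0=e_0$. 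Finally $\{b\in\GSpin(V_0^\prime):\pr(b)e_0=e_0\}=\GSpin(V_0)$ — the same Clifford-algebra check as above, with $C^+(V_0^\prime)=C^+(V_0)\oplus C^-(V_0)e_0$ and $q_V(e_0,e_0)\ne 0$. This gives $P_{r+1}^\prime\cap\GSpin(V)=N_{P_r}\rtimes(\GL(X_r)\times\GSpin(V_0))$, which is visibly contained in $P_r=P_r^\prime\cap\GSpin(V)$ since $\GSpin(V_0)\subseteq\GSpin(V_0^\prime)$. (Here the unipotent part is $N_{P_r}=N_{P_r^\prime}\cap\GSpin(V)$; the ambient radical $N_{P_r^\prime}$ is strictly larger and is not itself contained in $\GSpin(V)$, as it moves $f_0$ into $X_r$.)

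\emph{What is delicate.} There is no conceptual obstacle; the care is all in the bookkeeping. The key reduction is Step 1 — that stabilizing $X_{r+1}^\prime$ collapses to exactly the two stated conditions — which relies on $X_{r+1}^\prime\cap V=X_r$ and on $\pr(g)$ fixing $f_0$; and in Step 2, on correctly pinning down which block of the Levi carries the residual constraint. One should also be careful that the two identifications $\GSpin(V)=\{g:\pr(g)f_0=f_0\}$ and $\GSpin(V_0)=\{b:\pr(b)e_0=e_0\}$ are genuine equalities, not merely inclusions — these are precisely the Clifford-algebra computations above, where anisotropy of $f_0$ and $e_0$ in characteristic zero is used.
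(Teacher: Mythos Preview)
Your proof is correct and follows the same two-step route as the paper (first show that $g$ stabilizes $X_r$, then isolate the residual condition $\pr(g)e_0\equiv e_0\pmod{X_r}$ on the Levi factor), with your Step~1 via $X_{r+1}^\prime\cap V=X_r$ slightly more direct than the paper's pairing computation. You are also right to record the unipotent part as $N_{P_r}=N_{P_r^\prime}\cap\GSpin(V)$ rather than the full $N_{P_r^\prime}$, which is not contained in $\GSpin(V)$; the paper's statement is a mild abuse of notation here, silently corrected in the next lemma where $N_{P_r^\prime}\cap\GSpin(V)$ appears explicitly.
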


\begin{proof}
We first show that $P_{r+1}^\prime\cap \GSpin(V)  \subset P_r^\prime\cap \GSpin(V)$.	 Take any $g\in P_{r+1}^\prime\cap \GSpin(V)$. 
It suffices to show that $\pr(g)$ also stabilizes $X_r$, i.e., for any $x\in X_r$, we need to show that $\langle \pr(g)x, e_0-f_0 \rangle_{V^\prime}=0$. We compute that
\begin{equation*}
\begin{split}
	\langle \pr(g)x, e_0-f_0 \rangle_{V^\prime} &= \langle x, \pr(g)^{-1}(e_0-f_0) \rangle_{V^\prime} \\
	&=  \langle x, \pr(g)^{-1}(e_{r+1}-2f_0) \rangle_{V^\prime} \\
	&= \langle x, \pr(g)^{-1}e_{r+1} \rangle_{V^\prime} -2\langle x, \pr(g)^{-1} f_0 \rangle_{V^\prime} .
\end{split}
\end{equation*}
Since $g^{-1}\in \GSpin(V)$, $g^{-1}$ acts trivially on $f_0$, i.e., $\pr(g^{-1})f_0=f_0$. Hence
\begin{equation*}
	\langle x, \pr(g)^{-1} f_0 \rangle_{V^\prime}=\langle x,  f_0 \rangle_{V^\prime}=0.
\end{equation*}
Since $g^{-1}\in P_{r+1}^\prime$, it stabilizes $X_{r+1}^\prime$ and $\pr(g^{-1})e_{r+1}\in X_{r+1}^\prime$. Since $X_{r+1}^\prime$ is totally isotropic, 
we have
\begin{equation*}
\langle x, \pr(g)^{-1}e_{r+1}\rangle_{V^\prime}=0.	
\end{equation*}
Thus, 
\begin{equation*}
\langle \pr(g)x, e_0-f_0 \rangle_{V^\prime}=0.	
\end{equation*}
This proves $P_{r+1}^\prime\cap \GSpin(V)  \subset P_r^\prime\cap \GSpin(V)$.

Next, we show that as a subgroup of $P_r^\prime\cap \GSpin(V)$, 
$$
P_{r+1}^\prime\cap \GSpin(V) =(N_{P_{r}^\prime}\cap \GSpin(V)) \rtimes (\GL(X_r)\times \GSpin(V_0)).
$$ 
Let $h$ be an element of $P_r^\prime\cap \GSpin(V)$, so that $\pr(h)x\in X_{r}$ for all $x\in X_r$ and $\pr(h)f_0=f_0$. Note that
\begin{equation*}
\begin{split}
h\in P_{r+1}^\prime\cap \GSpin(V) &\iff \pr(h) e_{r+1}\in X_{r+1}^\prime \\
&\iff \pr(h)e_0+f_0\in X_{r+1}^\prime\\
&\iff \pr(h)e_0-e_0\in X_{r+1}^\prime.
\end{split}
\end{equation*}
Also, if $x\in X_r$, then $\pr(h)^{-1}x\in X_r$ and hence
\begin{equation*}
\langle \pr(h)e_0, x\rangle_{V^\prime}= \langle e_0, \pr(h)^{-1}x \rangle_{V^\prime}=0.
\end{equation*}
Thus we may write $\pr(h)e_0=\lambda e_0+w+x$ for some $\lambda\in F$, $w\in V_0$, $x\in X_r$. The condition $\pr(h)e_0-e_0\in X_{r+1}^\prime$ if and only if $\lambda=1$ and $w=0$, implying that $\pr(h)$ fixes $e_0$ modulo $X_r$ and hence stabilizes $V_0$ modulo $X_r$, consequently, $\pr(h)\in (N_{P_{r}^\prime}\cap \GSpin(V)) \rtimes (\GL(X_r)\times \GSpin(V_0))$, as desired. This completes the proof. 
\end{proof}

Let $\alpha:P_{r+1}^\prime \twoheadrightarrow   \ell_{r+1}(\GL_{r+1})\times \GSpin(V_0)\cong \GL_{r+1}\times \GSpin(V_0)$ be the natural projection, whose kernel is the unipotent radical $N_{P_{r+1}^\prime}$.

\begin{lemma}
\label{lemma-commutative-diagram}
The	natural projection map $\alpha:P_{r+1}^\prime \twoheadrightarrow\GSpin(V_0)\times \GL_{r+1}$ induces the following commutative diagram with exact rows, where the vertical arrows are inclusions:
\[  \begin{CD}
1 @>>> N_{P_{r+1}^\prime} @>>> P_{r+1}^\prime @>>> \GL_{r+1} \times \GSpin(V_0) @>>> 1 \\
&  &  @AAA     @AAA  @AAA  \\
1 @>>> N_{P_{r+1}^\prime}\cap \GSpin(V)   @>>> P_{r+1}^\prime\cap \GSpin(V) @>>>  R \times \GSpin(V_0) @>>> 1 \\
&  &@|  @AAA     @AAA     \\
1 @>>> N_{P_{r+1}^\prime}\cap \GSpin(V)  @>>> N_{P_r^\prime}\cap\GSpin(V) @>>>  \Hom(F e_{r+1}, X_r) @>>> 1.
\end{CD} \]
Here, $R\subset \GL(X_{r+1}^\prime)$ is the mirabolic subgroup which stabilizes the co-dimension one subspace $X_r\subset X_{r+1}^\prime$ and fixes $e_{r+1}$ modulo $X_r$, and $\Hom(F e_{r+1}, X_r)$ is the unipotent radical of $R$.
\end{lemma}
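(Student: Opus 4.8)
The plan is to read off all three rows, and all commutativities, from the preceding lemma, from the definition of $\alpha$, and from the elementary structure of the parabolics $P_r^\prime, P_{r+1}^\prime\subset\GSpin(V^\prime)$, using throughout that $\pr$ identifies unipotent elements of $\GSpin$ with those of $\SO$ and that Levi factors act on the ambient quadratic space through $\SO$. The top row is simply the Levi decomposition $P_{r+1}^\prime=N_{P_{r+1}^\prime}\rtimes(\GL_{r+1}\times\GSpin(V_0))$ together with the definition of $\alpha$ as the quotient by $N_{P_{r+1}^\prime}$, so it is exact by construction; the three left vertical arrows are the evident inclusions, and $R\times\GSpin(V_0)\hookrightarrow\GL_{r+1}\times\GSpin(V_0)$ is an inclusion because $R$ is a subgroup of $\GL(X_{r+1}^\prime)=\GL_{r+1}$ by definition. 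The top-left square commutes trivially, since all its arrows are inclusions and $N_{P_{r+1}^\prime}\cap\GSpin(V)=N_{P_{r+1}^\prime}\cap(P_{r+1}^\prime\cap\GSpin(V))$.

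For the middle row, the kernel of the restriction of $\alpha$ to $P_{r+1}^\prime\cap\GSpin(V)$ is $N_{P_{r+1}^\prime}\cap(P_{r+1}^\prime\cap\GSpin(V))=N_{P_{r+1}^\prime}\cap\GSpin(V)$, giving exactness on the left and in the middle. To see that $\alpha$ sends $P_{r+1}^\prime\cap\GSpin(V)$ into $R\times\GSpin(V_0)$ — which is at the same time the commutativity of the top-right square — take $g\in P_{r+1}^\prime\cap\GSpin(V)$; its $\GL_{r+1}$-component under $\alpha$ is the action of $\pr(g)$ on $X_{r+1}^\prime$, and by the preceding lemma (and the computation in its proof) $\pr(g)$ stabilizes $X_r$ and satisfies $\pr(g)e_0\in e_0+X_r$, hence $\pr(g)e_{r+1}=\pr(g)e_0+f_0\in e_{r+1}+X_r$, so this component stabilizes $X_r$ and fixes $e_{r+1}$ modulo $X_r$, i.e.\ lies in $R$. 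Surjectivity of $\alpha$ onto $R\times\GSpin(V_0)$ follows from the decomposition $P_{r+1}^\prime\cap\GSpin(V)=(N_{P_r^\prime}\cap\GSpin(V))\rtimes(\GL(X_r)\times\GSpin(V_0))$ of the preceding lemma together with $R=\GL(X_r)\ltimes\Hom(Fe_{r+1},X_r)$: the factor $\GSpin(V_0)$ goes isomorphically onto the $\GSpin(V_0)$-factor of $M_{P_{r+1}^\prime}$; the factor $\GL(X_r)$ goes isomorphically onto the Levi factor $\GL(X_r)$ of $R$ (acting on $X_r$ and fixing $e_{r+1}$, since $\GL(X_r)\subset\GSpin(V)$ fixes $e_0$ and $f_0$); and, as treated next, $N_{P_r^\prime}\cap\GSpin(V)$ maps onto the unipotent radical $\Hom(Fe_{r+1},X_r)$ of $R$.

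The bottom row is the heart of the matter. For $u\in N_{P_r^\prime}\cap\GSpin(V)$, the element $\pr(u)$ is unipotent, acts trivially on $X_r$ and on $X_r^\perp/X_r$, and fixes $f_0$; hence $\pr(u)e_0\in e_0+X_r$ and, as above, $\pr(u)e_{r+1}\in e_{r+1}+X_r$, so the $\GL_{r+1}$-component of $\alpha(u)$ is the transvection fixing $X_r$ and sending $e_{r+1}$ to $e_{r+1}+x(u)$ for a unique $x(u)\in X_r$, while the $\GSpin(V_0)$-component of $\alpha(u)$ is trivial because $(X_{r+1}^\prime)^\perp/X_{r+1}^\prime$ is a subquotient of $X_r^\perp/X_r$ (and a unipotent element of $\GSpin(V_0)$ acting trivially on $V_0$ is trivial). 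This gives the bottom-row map $u\mapsto x(u)\in\Hom(Fe_{r+1},X_r)$ and makes the bottom-right square commute. Its kernel consists of those $u$ with $\pr(u)$ fixing $X_{r+1}^\prime$ pointwise; for such $u$ the element $\pr(u)$ acts trivially on $X_{r+1}^\prime$, on the subquotient $(X_{r+1}^\prime)^\perp/X_{r+1}^\prime$ of $X_r^\perp/X_r$, and — since $\pr(u)$ preserves the form, hence $(X_{r+1}^\prime)^\perp$, with induced action on $V^\prime/(X_{r+1}^\prime)^\perp\cong (X_{r+1}^\prime)^*$ contragredient to the trivial action on $X_{r+1}^\prime$ — trivially on $V^\prime/(X_{r+1}^\prime)^\perp$ as well, so $\pr(u)\in N_{P_{r+1}^\prime}$, i.e.\ $u\in N_{P_{r+1}^\prime}\cap\GSpin(V)$. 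The reverse inclusion $N_{P_{r+1}^\prime}\cap\GSpin(V)\subseteq N_{P_r^\prime}\cap\GSpin(V)$ holds because, by the preceding lemma's decomposition, the image of $N_{P_{r+1}^\prime}\cap\GSpin(V)$ in the reductive quotient $\GL(X_r)\times\GSpin(V_0)$ is trivial ($N_{P_{r+1}^\prime}$ acts trivially on $X_r\subset X_{r+1}^\prime$ and on $(X_{r+1}^\prime)^\perp/X_{r+1}^\prime\cong V_0$). Thus the kernel equals $N_{P_{r+1}^\prime}\cap\GSpin(V)$, compatibly with the equality on the far left of the diagram. Finally, surjectivity of $u\mapsto x(u)$ is obtained by exhibiting, for each $x\in X_r$, the Eichler transvection of $\SO(V^\prime)$ built from the isotropic vector $x$ and $e_0$: its unipotent lift fixes $f_0$, lies in $N_{P_r^\prime}\cap\GSpin(V)$, fixes $X_r$ pointwise, sends $e_0$ to $e_0+x$ up to the nonzero scalar $\langle e_0,e_0\rangle_{V^\prime}$, and therefore maps to $x$. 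The remaining left-hand squares commute because all maps in sight are the obvious inclusions and projections.

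I expect the only real obstacle to be organizational: one must keep straight the three incarnations of $\GSpin(V_0)$ and of $\GL(X_r)$ (as honest subgroups of $\GSpin(V)\subset\GSpin(V^\prime)$, as Levi factors of $P_{r+1}^\prime$ and $P_r^\prime$, and, for $\GL(X_r)$, as the Levi of the mirabolic $R$) and verify that under $\pr$ these identifications match on the nose; once coordinates on $V^\prime$ are fixed, everything reduces to the bookkeeping above together with the preceding lemma.
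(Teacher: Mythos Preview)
Your proposal is correct and follows essentially the same approach as the paper: both arguments hinge on the preceding lemma's decomposition $P_{r+1}^\prime\cap\GSpin(V)=N_{P_r^\prime}\rtimes(\GL(X_r)\times\GSpin(V_0))$, compute the action on $e_{r+1}$ via $\pr(h)e_{r+1}-e_{r+1}=\pr(h)e_0-e_0\in X_r$, and establish surjectivity onto $\Hom(Fe_{r+1},X_r)$ by exhibiting, for each $x\in X_r$, a unipotent element $h_x\in N_{P_r^\prime}\cap\GSpin(V)$ with $\pr(h_x)e_0=e_0+x$ (you name this an Eichler transvection, the paper simply posits its existence). Your write-up is somewhat more explicit about why the $\GSpin(V_0)$-component of $\alpha(u)$ is trivial for $u\in N_{P_r^\prime}\cap\GSpin(V)$ and about the reverse kernel inclusion, but these are expansions of steps the paper leaves implicit rather than a different route.
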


\begin{proof}
We consider the restriction of $\alpha$ to the subgroup $P_{r+1}^\prime\cap \GSpin(V) =N_{P_{r}} \rtimes (\GL(X_r)\times \GSpin(V_0))$. Observe that the subgroup $\GL_r\times \GSpin(V_0)$ is mapped isomorphically to its image in $\GL_{r+1} \times \GSpin(V_0)$, and its image is precisely $\GL_r\times \GSpin(V_0)$, which is the Levi subgroup of $R\times \GSpin(V_0)$. 

Next, we analyze the kernel of $\alpha$ restricted to $P_{r+1}^\prime\cap \GSpin(V)$, which is given by
$$\ker(\alpha|_{P_{r+1}^\prime\cap \GSpin(V)}) =N_{P_{r+1}^\prime}\cap \GSpin(V).
$$ 
We claim that $\ker(\alpha|_{P_{r+1}^\prime\cap \GSpin(V)})$ is contained in $N_{P_r^\prime}\cap \GSpin(V)$. To prove this, take $h\in N_{P_{r+1}^\prime}\cap \GSpin(V)$, so that $\pr(h)f_0=f_0$. Our goal is to show that $\pr(h)$ acts trivially on $X_r$, and acts trivially on $V_0^\prime$ modulo $X_r$. Since $h\in  N_{P_{r+1}^\prime}$, $\pr(h)$ acts trivially on $X_{r+1}^\prime$, and acts trivially on $V_0$ modulo $X_{r+1}^\prime$. Since $X_r\subset X_{r+1}^\prime$, $\pr(h)$ acts trivially on $X_r$. Also, for $w\in V_0$, we have $\pr(h)w-w\in X_{r+1}^\prime\cap V=X_r$. It remains to show that $\pr(h)e_0-e_0\in X_r=X_{r+1}^\prime\cap V$. Since $\pr(h)e_0-e_0\in V$, it suffices to show that $\pr(h)e_0-e_0\in X_{r+1}^\prime$. We have
\begin{equation*}
\pr(h)e_0-e_0=\pr(h)(e_0+f_0)-(e_0+f_0)=\pr(h)e_{r+1}-e_{r+1}	\in X_{r+1}^\prime,
\end{equation*}
as desired. This proves the claim. 

Now we show that the map $\alpha$ induces an isomorphism
\begin{equation*}
	(N_{P_r^\prime}\cap\GSpin(V)) /  (N_{P_{r+1}^\prime}\cap \GSpin(V)) \cong  \Hom(F e_{r+1}, X_r).
\end{equation*}
If $h\in N_{P_r^\prime}\cap\GSpin(V)$, then $\pr(h)$ acts trivially on $X_r$, and $\pr(h)$ acts trivially on $V_0^\prime$ modulo $X_r$, and $\pr(h)f_0=f_0$. Then
\begin{equation*}
\pr(h)e_{r+1}-e_{r+1}=\pr(h)(e_0+f_0)-(e_0+f_0)=\pr(h)e_0-e_0\in X_r.	
\end{equation*}
It follows that $\alpha(h)$ lies in the unipotent radical $\Hom(Fe_{r+1}, X_r)$ of $R$. 
It remains to prove that $\alpha|_{P_{r+1}^\prime\cap \GSpin(V)}$ is surjective onto $\Hom(Fe_{r+1}, X_r)$. 
For any $x\in X_r$, set $h_x=1+\frac{1}{\langle e_0,e_0\rangle}xe_0.$ Then $h_x\in N_{P_r^\prime}\cap \GSpin(V)$, fixes $X_r$ and $V_0$, and satisfies $\pr(h_x)e_0=e_0+x$. Then $\pr(\alpha(h_x))(e_{r+1})=x$, as desired. Thus we have the commutative diagram. 
\end{proof}

By Lemma~\ref{lemma-commutative-diagram}, we see that the preimage of $Z_{r+1}\times\GSpin(V_0)$ in $P_{r+1}^\prime\cap \GSpin(V)$ under the restriction of $\alpha$ is precisely the Bessel subgroup 
\begin{equation*}
H=N_{P_r}\rtimes (\GSpin(V_0)\times Z_r).
\end{equation*}
Thus the restriction of $\alpha$ gives a surjective homomorphism
\begin{equation*}
\alpha|_H:H	 \twoheadrightarrow Z_{r+1}\times\GSpin(V_0).
\end{equation*}
Moreover, the character $\xi:H\to \mathbb{C}^\times$ is trivial on the kernel of $\alpha|_H$.  Indeed, the kernel of $\alpha|_H$ is $N_{P_{r+1}'}\cap \GSpin(V)$, and this subgroup is contained in the kernel of $\xi$.
Hence $\xi$ descends to a character on $Z_{r+1}\times\GSpin(V_0)$. Equivalently, there exists a character $\chi$ of $Z_{r+1}\times \GSpin(V_0)$ such that 
\begin{equation}
\label{eq-xi-chi}
\xi=\chi\circ 	\alpha|_{H}.
\end{equation}
Here, $\chi$ is a unitary generic character on $Z_{r+1}$, extended to a character on $Z_{r+1}\times\GSpin(V_0)$ by $\chi(uh)=\chi(u)$ for $u\in Z_{r+1}, h\in \GSpin(V_0)$. 

\begin{remark}
\label{remark-chi-xi-compatibility}
The character $\chi$ is compatible with the character $\xi$ defined in Section~\ref{subsection-Bessel-subgroup} in the following sense.
Recall that on $N_{Q_r}=Z_r\ltimes N_{P_r}$, $\xi$ was defined as $\xi(u)=\psi(x_1+\cdots+x_{r-1}+z)$, where $x_i$ and $z$ are given by \eqref{eq-char-xi-x} and \eqref{eq-char-xi-z}. Under the quotient map $\alpha|_H:H\twoheadrightarrow Z_{r+1}\times \GSpin(V_0)$, the coordinates $x_1,\dots,x_{r-1}$ correspond to the simple root coordinates
of the subgroup $Z_r\subset Z_{r+1}$, while the coordinate $z$ corresponds to
the additional simple root subgroup $\Hom(Fe_{r+1},X_r)$ appearing in the unipotent radical of the mirabolic subgroup $R\subset \GL_{r+1}=\GL(X_{r+1}^\prime)$. Consequently, up to conjugation, the character $\chi$ of $Z_{r+1}$ is the generic
character obtained from the same linear functional $(x_1,\dots,x_{r-1},z)\longmapsto x_1+\cdots+x_{r-1}+z.$
\end{remark}

 \subsection{Induced representations of $\GSpin(V')$}
\label{subsection-induced-rep}

Let $\pi_0$ be an irreducible admissible representation of $\GSpin(V_0)$ and let 
$\sigma$ be an irreducible admissible $\chi^{-1}$-generic representation of $\GL_{r+1}$. 
The existence of such $\sigma$ is well known; see for example \cite[Theorem 9.7]{Zelevinsky1980} when $F$ is non-archimedean and \cite[Theorem 9.1]{CasselmanHechtMilivciv2000} when $F$ is archimedean. 
Put 
\begin{equation*}
\rho:= \pi_0\otimes \sigma,
\end{equation*}
which is an admissible representation of the Levi subgroup $M_{P_{r+1}'}\cong \GSpin(V_0)\times \GL_{r+1}.$
For $s\in \mathbb C$, define $\rho_s(h m)=|\det(m)|^s\rho(h m),$ for $h\in \GSpin(V_0),\ m\in \GL_{r+1}.$ 
\textbf{Throughout, we identify $\GL_{r+1}$ with its lift $\ell_{r+1}(\GL_{r+1})$ in $\GSpin(V^\prime)$, where $\ell_{r+1}$ is given in Lemma~\ref{lemma-Levi-lifting}.}

Let $\pi_s'=\Ind_{P_{r+1}'}^{\GSpin(V')}(\rho_s)$ denote the (unnormalized) induced representation  of $\GSpin(V^\prime)$, consisting of all smooth functions $f:\GSpin(V^\prime)\to \rho$ satisfying
\begin{equation}
\label{eq-induced-repn-equivariance}
f( ugm x)	=|\det(m)|^{s} \rho(g m)(f(x))
\end{equation}
for all $u\in N_{P_{r+1}^\prime}$, $g\in \GSpin(V_0)$, $m\in \GL_{r+1}$, $x\in \GSpin(V^\prime)$. 
Note that for $z\in F^\times\subset \GSpin(V_0)$, we have 
\begin{equation}
\label{eq-induced-repn-f(zx)}
f(zx)=\rho(z)f(x)=\omega_{\pi_0}(z)f(x), \quad  \forall x\in \GSpin(V^\prime).	
\end{equation}

We have the following lemma, which will be used in Section~\ref{subsection-proof-of-thm}.

\begin{lemma}
\label{lemma-irreducible}
The representation $\pi_s^\prime$ is irreducible except for a measure zero subset of $\mathbb{C}$. 
\end{lemma}

\begin{proof}
When $F$ is archimedean, this is a consequence of \cite[Theorem 1.1]{SpehVogan1980}. When $F$ is non-archimedean, this follows from Bernstein's generic irreducible theorem for parabolic induction (see \cite[Proposition VI.8.4]{Renard2010}).
\end{proof}

\section{Proof of Theorem~\ref{thm-main}}
\label{section-proof}

\subsection{A local integral}
\label{subsection-local-integral}
Recall that  $\sigma$ is $\chi^{-1}$-generic. We fix a non-zero $\chi^{-1}$-Whittaker functional $\lambda$ of $\sigma$, satisfying
\begin{equation}
\label{eq-lambda-sigma}
\lambda(\sigma(u) v_\sigma)=\chi^{-1}(u)\lambda(v_\sigma), \quad \text{ for all }u\in Z_{r+1}, v_\sigma\in \sigma.	
\end{equation}
Then we define a linear map $\Lambda:\rho\to\pi_0$ by
\begin{equation*}
\begin{split}
\Lambda: \rho=\pi_0\otimes \sigma &\to \pi_0 \\
v_0 \otimes v_\sigma &\mapsto \lambda(v_\sigma)v_0.	
\end{split}
\end{equation*}

Let $\pi$, $\pi_0$ be irreducible admissible representations of $\GSpin(V)$ and $\GSpin(V_0)$ respectively, and let 
\begin{equation*}
\mu: \pi\otimes \pi_0\to \mathbb{C}	
\end{equation*}
be a Bessel functional, i.e., a map in the space $\Hom_{H}(\pi\otimes \pi_0, \xi)$. 

\begin{remark}
\label{remark-central-character-compatibility}
Let $\omega_{\pi}, \omega_{\pi_0}$ denote the central characters of $\pi$ and $\pi_0$ respectively. Note that $F^\times$ lies in both the centers of $\GSpin(V)$ and $\GSpin(V_0)$ (see Lemma~\ref{lemma-center}), and $\xi|_{F^\times}$ is trivial. If $\omega_{\pi}|_{F^\times} \omega_{\pi_0}|_{F^\times}$ is not the trivial character of $F^\times$, then $\Hom_{H}(\pi\otimes \pi_0, \xi)=0$. Thus we may assume that $\omega_{\pi}|_{F^\times} \omega_{\pi_0}|_{F^\times}$ is the trivial character of $F^\times$.
\end{remark}

We have the following result.
\begin{lemma}
\label{lemma-H-invariance-of-mu}
For every $s\in\mathbb{C}$, $v\in \pi$ and $f\in \pi_s^\prime$, the smooth function on $\GSpin(V)$ defined by	
\begin{equation*}
g\mapsto \mu(\pi(g)v, \Lambda(f(g)))
\end{equation*}
is left invariant under $H$.
\end{lemma}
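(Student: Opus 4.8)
The plan is to verify directly that the function $g \mapsto \mu(\pi(g)v, \Lambda(f(g)))$ transforms trivially under left translation by $H$, by decomposing an element of $H$ according to the semidirect product structure $H = N_{Q_r} \rtimes \GSpin(V_0)$ and tracking how each factor interacts with the three pieces: the representation $\pi(g)v$, the induced section $f(g)$, and the functional $\mu$. Fix $h \in H$ and write $\Phi(g) = \mu(\pi(g)v, \Lambda(f(g)))$; I want to show $\Phi(hg) = \Phi(g)$. First I would compute $\Phi(hg) = \mu(\pi(h)\pi(g)v, \Lambda(f(hg)))$, so the task splits into understanding $f(hg)$ for $h \in H$ and then invoking the defining equivariance of $\mu$ under $H$ against the character $\xi$.

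The key computation is the behavior of $f(hg)$. Using the containment $H \subset P_{r+1}^\prime \cap \GSpin(V)$ together with the factorization $\xi = \chi \circ \alpha|_{P_{r+1}^\prime \cap \GSpin(V)}$ from \eqref{eq-xi-chi} and Lemma~\ref{lemma-commutative-diagram}, any $h \in H$ maps under $\alpha$ into $Z_{r+1} \times \GSpin(V_0)$. Writing $\alpha(h) = (u, h_0)$ with $u \in Z_{r+1}$ and $h_0 \in \GSpin(V_0)$, and decomposing $h = n \cdot m$ in $P_{r+1}^\prime$ with $n \in N_{P_{r+1}^\prime}$ and $m$ lifting $(u, h_0)$ (with $\det u = 1$ since $u$ is unipotent), the defining transformation law of $\pi_s^\prime$ gives
\begin{equation*}
f(hg) = |\det(u)|^s \, \rho(h_0 u)\bigl(f(g)\bigr) = \rho(h_0)\,\rho(u)\bigl(f(g)\bigr),
\end{equation*}
where $\rho = \pi_0 \otimes \sigma$ and $u$ acts only through the $\sigma$-factor. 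Now apply $\Lambda$: since $\Lambda(v_0 \otimes v_\sigma) = \lambda(v_\sigma) v_0$ and $\lambda$ is a $\chi^{-1}$-Whittaker functional, we get $\Lambda(\rho(u) w) = \chi^{-1}(u)\,\Lambda(w)$ for $w \in \rho$, while $\rho(h_0)$ acts through the $\pi_0$-factor and commutes past $\Lambda$ to give $\pi_0(h_0)$; hence $\Lambda(f(hg)) = \chi^{-1}(u)\,\pi_0(h_0)\bigl(\Lambda(f(g))\bigr)$. Therefore
\begin{equation*}
\Phi(hg) = \chi^{-1}(u)\,\mu\bigl(\pi(h)\pi(g)v,\ \pi_0(h_0)\Lambda(f(g))\bigr).
\end{equation*}

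To finish, I would relate $(u, h_0) = \alpha(h)$ back to $\xi(h)$. The identity \eqref{eq-xi-chi} says $\xi(h) = \chi(\alpha(h)) = \chi(u)$ (the extension of $\chi$ to $Z_{r+1}\times \GSpin(V_0)$ being trivial on the $\GSpin(V_0)$-factor). On the other hand, $\mu \in \Hom_H(\pi \otimes \pi_0, \xi)$ means $\mu(\pi(h)w_1, \pi_0(h)w_2) = \xi(h)\,\mu(w_1, w_2)$ for $h \in H$; I must check that the $\GSpin(V_0)$-action on $\pi_0$ appearing here is exactly $\pi_0(h_0)$ with $h_0$ the component of $\alpha(h)$ — this is precisely the compatibility of the embedding $\GSpin(V_0) \hookrightarrow H$ with the projection $\alpha$, which follows from the description of $H$ in Section~\ref{subsection-Bessel-subgroup} (the $\GSpin(V_0)$ in $H = N_{Q_r} \rtimes \GSpin(V_0)$ is carried isomorphically to the $\GSpin(V_0)$-factor of $Z_{r+1}\times\GSpin(V_0)$). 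Granting this, $\mu(\pi(h)\pi(g)v, \pi_0(h_0)\Lambda(f(g))) = \xi(h)\,\mu(\pi(g)v, \Lambda(f(g))) = \chi(u)\,\Phi(g)$, and combining with the $\chi^{-1}(u)$ prefactor yields $\Phi(hg) = \Phi(g)$. Smoothness of $\Phi$ is automatic since $g \mapsto f(g)$ and $g \mapsto \pi(g)v$ are smooth and $\mu, \Lambda$ are continuous. The main obstacle is purely bookkeeping: correctly identifying, via Lemma~\ref{lemma-commutative-diagram}, which component of $\alpha(h)$ controls the $\sigma$-twist (giving the $\chi^{-1}$) versus the $\pi_0$-twist (giving the $\GSpin(V_0)$-equivariance that pairs against $\mu$), and confirming these cancel rather than compound; I would also need to be slightly careful that a lift $m$ of $(u,h_0)$ into $M_{P_{r+1}^\prime}$ differing from $h$ by an element of $N_{P_{r+1}^\prime}$ does not affect $f(hg)$, which is immediate from the left $N_{P_{r+1}^\prime}$-invariance built into $\pi_s^\prime$.
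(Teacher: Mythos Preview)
Your proof is correct and follows essentially the same approach as the paper. The paper writes an element $b\in H$ directly as $b=nhm$ with $n\in N_{P_{r+1}^\prime}$, $h\in\GSpin(V_0)$, $m\in Z_{r+1}$, while you phrase the same decomposition via the map $\alpha$ and Lemma~\ref{lemma-commutative-diagram}; both then compute $\Lambda(f(bg))=\chi^{-1}(m)\pi_0(h)\Lambda(f(g))=\xi^{-1}(b)\pi_0(h)\Lambda(f(g))$ and cancel against the $H$-equivariance of $\mu$, using that the $\GSpin(V_0)$-component $h$ (your $h_0$) agrees with the image of $b$ under the quotient $H\to\GSpin(V_0)$ through which $\pi_0$ is inflated.
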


\begin{proof}
Let $g\in \GSpin(V)$ and $b\in H \subset  P_{r+1}^\prime$. We may write $b=nhm$, where $n\in N_{P_{r+1}^\prime}$, $h\in \GSpin(V_0)$, $m\in Z_{r+1}$. Recall that we have identified $Z_{r+1}$ with its lift $\ell_{r+1}(Z_{r+1})\subset \GSpin(V^\prime)$. Then
\begin{equation*}
\Lambda(f(bg))= \Lambda(\rho(hm)	f(g)) = \chi^{-1}(m)\pi_0(h)(\Lambda(f(g))) =\xi^{-1}(b) \pi_0(h)(\Lambda(f(g)))
\end{equation*}
and hence
\begin{equation*}
\begin{split}
	 \mu(\pi(bg)v, \Lambda(f(bg))) &=\xi^{-1}(b) \mu(\pi(bg)v, \pi_0(h)(\Lambda(f(g)))) \\
	 &= \mu( \pi(b^{-1}) \pi(bg)v, \pi_0(b^{-1})\pi_0(h)(\Lambda(f(g)))) \\
	 &=\mu(\pi(g)v, \Lambda(f(g))) 
\end{split}
\end{equation*}
where the last equality holds because $b$ maps to $h$ under the quotient map $H\to \GSpin(V_0)$, and $\pi_0$ is viewed as a representation of $H$ via inflation. 
\end{proof}

Since $\GSpin(V)$ and $H$ are unimodular, there exists a right $\GSpin(V)$-invariant positive measure on $H\backslash \GSpin(V)$ unique up to scalar, and we write $dg$ for such a measure. 
 The following local zeta integral plays an important role in the proof of Theorem~\ref{thm-main}:
\begin{equation}
\label{eq-I-mu}
\mathcal{Z}_{\mu}(f,v):=\int_{H\backslash \GSpin(V)} 	\mu(\pi(g)v, \Lambda(f(g))) dg, \quad f\in \pi_s^\prime, v\in \pi.
\end{equation}
Whenever the integral converges absolutely, the right $\GSpin(V)$-invariance of $dg$ implies that, for every $g_0\in \GSpin(V)$, we have
\begin{equation}
\label{eq-integral-equivariant-property}
\mathcal{Z}_{\mu}(\pi_s^\prime(g_0)f,\pi(g_0)v)=	\mathcal{Z}_{\mu}(f,v).
\end{equation}
Indeed, this follows from the change of variable $gg_0\mapsto g$.

We postpone the proof of the following proposition to Section~\ref{subsection-proof-prop1}.
 
\begin{proposition}
\label{prop-integral-converge-1}
Let $\mu\in \Hom_{H}(\pi\otimes \pi_0, \xi)$ be nonzero. There exist an element $f_\rho\in \pi_s^\prime$ and a vector $v\in \pi$ such that $\mathcal{Z}_{\mu}(f_\rho, v)$ converges absolutely and yields a nonzero number. 
\end{proposition}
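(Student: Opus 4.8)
\textbf{Proof plan for Proposition~\ref{prop-integral-converge-1}.}

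The strategy is the standard one for establishing non-vanishing of a zeta integral attached to a functional: exploit the fact that $\mu$ is non-zero together with the smoothness of all data to localize the integrand near the base point of $H\backslash\GSpin(V)$, so that the integral reduces to evaluating $\mu$ on a single pair of vectors. First I would pick $v_0\in\pi_0$ and $v\in\pi$ with $\mu(v\otimes v_0)\neq 0$; since $\sigma$ is $\chi^{-1}$-generic, I may also pick $v_\sigma\in\sigma$ with $\lambda(v_\sigma)\neq 0$, so that $\Lambda(v_0\otimes v_\sigma)=\lambda(v_\sigma)v_0$ is a non-zero multiple of $v_0$. The aim is to build $f_\rho\in\pi_s^\prime$ whose support projects to a small neighborhood of the identity coset in $P_{r+1}^\prime\backslash\GSpin(V^\prime)$, restricted appropriately to $\GSpin(V)$, and such that $f_\rho(e)$ is (a multiple of) $v_0\otimes v_\sigma$; then the integrand $g\mapsto\mu(\pi(g)v,\Lambda(f_\rho(g)))$ is supported in a small neighborhood $U$ of $H$ in $H\backslash\GSpin(V)$, and on that neighborhood it is close to the constant $\lambda(v_\sigma)\,\mu(v\otimes v_0)\neq 0$.

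Concretely, I would use the open cell: the multiplication map $N_{P_{r+1}^\prime}^{-}\times(P_{r+1}^\prime\cap\GSpin(V))\to\GSpin(V)$ (or an Iwasawa-type decomposition relative to the maximal compact when $F$ is archimedean) is an open immersion near the identity, so given any neighborhood basis one can choose a smooth, compactly supported section and extend it to an element $f_\rho\in\pi_s^\prime$ using the defining transformation property $f(nhm\,x)=|\det m|^s\rho(hm)f(x)$. By construction $\Lambda(f_\rho(g))$ is a $\pi_0$-valued bump function near $e$ with value $\lambda(v_\sigma)v_0$ at $g=e$. Since $g\mapsto\pi(g)v$ and $g\mapsto\Lambda(f_\rho(g))$ are continuous and $\mu$ is continuous (in the archimedean case) or everything is locally constant (in the non-archimedean case), by shrinking the support we can make $|\mu(\pi(g)v,\Lambda(f_\rho(g)))-\lambda(v_\sigma)\mu(v\otimes v_0)|$ as small as we like on the (relatively compact) support, whence $\mathcal{Z}_\mu(f_\rho,v)$ converges absolutely and is within $\varepsilon\cdot\vol(U)$ of $\lambda(v_\sigma)\mu(v\otimes v_0)\,\vol(U)\neq0$ for suitable normalization of the bump function; choosing $\varepsilon$ small makes $\mathcal{Z}_\mu(f_\rho,v)\neq 0$. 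Absolute convergence is immediate because the integrand is compactly supported on $H\backslash\GSpin(V)$.

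The main obstacle I anticipate is the bookkeeping needed to guarantee that a section supported near $e$ in $H\backslash\GSpin(V)$ actually extends to a well-defined element of $\pi_s^\prime$ on all of $\GSpin(V^\prime)$, i.e.\ that the local datum $(v_0\otimes v_\sigma$ placed at $e)$ is compatible with the $P_{r+1}^\prime$-equivariance along the full group $\GSpin(V^\prime)$, not merely along $P_{r+1}^\prime\cap\GSpin(V)$. This is where the structural results of Lemma~\ref{lemma-commutative-diagram} and the factorization $\xi=\chi\circ\alpha$ enter: they identify $H$ as the preimage of $Z_{r+1}\times\GSpin(V_0)$, so that the transformation law of $f_\rho$ under $H$ matches $\xi^{-1}$ exactly, which is precisely what is needed for the integrand to descend to a genuine, compactly-supported function on $H\backslash\GSpin(V)$. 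In the archimedean case one must additionally ensure the constructed $f_\rho$ lies in the Casselman--Wallach completion and that $\mu$, $\lambda$ are continuous so that the $\varepsilon$-approximation argument is valid; I would invoke automatic continuity of $\Hom_H(\pi\otimes\pi_0,\xi)$ and the continuity of Whittaker functionals on Casselman--Wallach representations for this.
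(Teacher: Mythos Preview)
Your localization strategy has a genuine gap: the integrand cannot be made compactly supported on $H\backslash\GSpin(V)$ merely by choosing $f_\rho$ with small support in $P_{r+1}^\prime\backslash\GSpin(V^\prime)$. The point is that $H$ is a \emph{proper} subgroup of $P_{r+1}^\prime\cap\GSpin(V)$, and by Lemma~\ref{lemma-commutative-diagram} the quotient is
\[
H\backslash\bigl(P_{r+1}^\prime\cap\GSpin(V)\bigr)\;\cong\;Z_{r+1}\backslash R\;\cong\;Z_r\backslash\GL_r,
\]
which is non-compact for $r\ge1$. Concretely, if $f_\rho(e)=v_0\otimes v_\sigma$ then for $m\in\GL_r\subset P_{r+1}^\prime\cap\GSpin(V)$ one has $\Lambda(f_\rho(m))=|\det m|^s\,\lambda(\sigma(m)v_\sigma)\,v_0$, so the integrand along the fiber over the identity coset of $(P_{r+1}^\prime\cap\GSpin(V))\backslash\GSpin(V)$ is
\[
m\longmapsto |\det m|^s\,\lambda(\sigma(m)v_\sigma)\,\mu(\pi(m)v,v_0)\,\delta_{P_r}^{-1}(m),
\]
and there is no reason for this to be compactly supported on $Z_r\backslash\GL_r$, nor any reason for its integral to be nonzero, for a generic choice of $v_\sigma$ with $\lambda(v_\sigma)\ne0$. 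Your $\varepsilon$-approximation argument therefore does not apply.

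The paper's proof resolves exactly this point with an additional, non-trivial ingredient: one first fixes $v$ and a section $f_{\pi_0}$ so that the integral over $\GSpin(V_0)\backslash\GSpin(V_0^\prime)$ is nonzero (this part is close in spirit to your bump-function idea), obtaining a function $\Phi(m,\overline{n})$ on $\GL_r\times\overline{N}_{P_r}$ transforming by $\chi$ under $Z_r$. Then one invokes the Kirillov-model theorem of Bernstein--Zelevinsky (non-archimedean) and Jacquet--Shalika (archimedean): \emph{any} smooth function $W_r$ on $\GL_r$ that is compactly supported modulo $Z_r$ and transforms by $\chi^{-1}$ arises as $W_r(m)=\lambda(\sigma(m)v_\sigma)$ for some $v_\sigma\in\sigma$. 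This is what allows one to choose $v_\sigma$ so that the $Z_r\backslash\GL_r$-integration is over a compact set and the full integral is nonzero. Your proposal is missing precisely this step.
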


We also have the following result.

\begin{proposition}
\label{prop-integral-converge-2}
There exists a number $c_{\mu}\in \mathbb{R}$ such that for all $s\in \mathbb{C}$ with $\mathrm{Re}(s)>c_{\mu}$, the integral $\mathcal{Z}_{\mu}(f,v)$ converges absolutely for all $f\in \pi_s^\prime$ and all $v\in \pi$. In the archimedean case, in the domain of absolute convergence, $\mathcal{Z}_{\mu}(f,v)$ is continuous in the input data.
\end{proposition}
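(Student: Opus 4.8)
The plan is to estimate the integrand $\mu(\pi(g)v, \Lambda(f(g)))$ pointwise on $H\backslash \GSpin(V)$ and compare it to an integral over $H\backslash \GSpin(V)$ of the form $\int |\det(m(g))|^{\Re(s)} \Xi(g)\, dg$ that is known to converge for $\Re(s)$ large; the rapid decay (archimedean) or compact support behaviour (non-archimedean) of the Bessel functional applied to a matrix coefficient will supply the needed decay in the transverse directions, while the factor $|\det(m)|^s$ coming from the section $f$ controls the remaining ``$\GL_{r+1}$-cone'' direction. Concretely, I would first fix a good set of coordinates on $H\backslash \GSpin(V)$ adapted to the parabolic $P_{r+1}'\cap \GSpin(V)$, using the Iwasawa-type decomposition relative to the flag stabilized by $P_{r+1}'$, so that a point of $H\backslash \GSpin(V)$ is parametrized (up to a compact factor) by the torus part $t$ of $\GL_{r+1}$ modulo $Z_{r+1}$, i.e.\ by $r+1$ real parameters, together with finitely many ``unipotent'' coordinates coming from $N_{P_{r+1}'}\cap \GSpin(V)$ and the Weyl-group translates needed to cover the whole space.

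Second, I would bound $|f(g)|$ (valued in the finite-dimensional-after-$\Lambda$ quotient $\pi_0$): by the defining transformation law, $\Lambda(f(nhm x)) = |\det m|^s \chi^{-1}(\text{unipotent part of }m)\,\pi_0(h)\Lambda(f(x))$, so on the coordinate patch $|\Lambda(f(g))|$ is dominated by $|\det t|^{\Re(s)}$ times a continuous function of the compact coordinates and a fixed seminorm of $f$; in the archimedean case this is where one invokes continuity of $f\mapsto f|_K$ and uses a Sobolev/seminorm estimate to get the ``continuous in the input data'' clause. Third, I would bound $|\mu(\pi(g)v, w)|$ for $w$ in a fixed bounded subset of $\pi_0$: since $\mu$ is a fixed continuous functional, $|\mu(\pi(g)v, w)| \le \|\mu\|\cdot p(\pi(g)v)\cdot q(w)$ for suitable seminorms, and the key input is a bound on the growth of $p(\pi(g)v)$ along $H\backslash \GSpin(V)$. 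For this I would use that $\pi$ is of Casselman--Wallach type (resp.\ admissible in the $p$-adic case) so that its matrix coefficients, restricted along the torus of $\GL_{r+1}$, decay (or are controlled) in terms of a finite product of the coordinate characters; combining this with the $\xi$-equivariance of the integrand (which kills the $N_{P_r}$ and $Z_r$ directions outright) one gets an estimate of the shape $|\text{integrand}(g)| \le |\det t|^{\Re(s)}\, \Xi_\pi(t)\, \phi(\text{compact coords})$ where $\Xi_\pi$ is a fixed function with at-most-polynomial (archimedean) or finitely-supported-modulo-decay (non-archimedean) growth.

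Finally, I would choose $c_\mu$ so that for $\Re(s) > c_\mu$ the dominating function $|\det t|^{\Re(s)}\Xi_\pi(t)$ is integrable against the invariant measure on the $(r+1)$-dimensional torus quotient: concretely, writing $t = \diag(t_1,\dots,t_{r+1})$ with $|t_i| = \varpi^{n_i}$ (or $t_i\in \R_{>0}$), the measure and the modulus character contribute a product $\prod_i |t_i|^{a_i + s}$ times at worst polynomial factors, and one takes $c_\mu$ larger than all the finitely many critical exponents $-a_i$ coming from $\rho_{P_{r+1}'}$ and from the growth of $\Xi_\pi$. The continuity statement over archimedean $F$ then follows by dominated convergence together with the seminorm bounds, since the dominating function is independent of $v$ in a bounded set and of $f$ in a bounded set. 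I expect the main obstacle to be making the growth estimate for $p(\pi(g)v)$ along the $\GL_{r+1}$-torus uniform and explicit enough --- i.e.\ extracting from the Casselman--Wallach (or Jacquet-module) theory a clean bound of the form $p(\pi(g)v)\le C\,\prod_i |t_i|^{b_i}(1+\text{log terms})$ on the relevant cone --- and in correctly bookkeeping the finitely many Weyl chambers / coset representatives needed to cover $H\backslash\GSpin(V)$ so that the estimate holds globally rather than just on the dominant chamber.
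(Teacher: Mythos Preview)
Your proposal outlines a direct estimation argument, which is the standard way such convergence statements are ultimately established and is essentially what underlies the reference the paper cites. The paper itself, however, takes a much shorter route: it observes that the Bessel subgroup $H$ contains the connected center $F^\times=\ker(\pr)$ of $\GSpin(V)$, so the projection $\pr:\GSpin(V)\to\SO(V)$ induces an identification
\[
H\backslash \GSpin(V)\;\cong\;\pr(H)\backslash \SO(V),
\]
with $\pr(H)$ the usual Bessel subgroup of $\SO(V)$. The integrand $g\mapsto \mu(\pi(g)v,\Lambda(f(g)))$ descends through this identification, and the resulting integral is exactly the local zeta integral for special orthogonal groups whose absolute convergence (and continuity in the archimedean case) is already established in \cite[Lemma~4.1]{JiangZhang2014}. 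That is the entire proof.

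Your direct approach is viable in principle---indeed it is roughly how the $\SO$ result itself is proved---so the two routes converge in the end; the paper simply avoids redoing the work. One comment on your sketch: bounding $|\mu(\pi(g)v,w)|\le \|\mu\|\,p(\pi(g)v)\,q(w)$ and then controlling $p(\pi(g)v)$ as $g$ ranges over all of $\GSpin(V)$ is too coarse; what one actually uses is that the Bessel function $g\mapsto \mu(\pi(g)v,w)$ itself, by virtue of its $(H,\xi)$-equivariance on the left, lives on $H\backslash\GSpin(V)$ and admits moderate-growth (archimedean) or asymptotic-expansion (non-archimedean) estimates there. It is this function, not $\pi(g)v$ in a Fr\'echet seminorm, that one plays off against the $|\det|^{\Re(s)}$ factor from the section. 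The obstacle you flag at the end is real, and it is precisely the content of the cited lemma.
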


\begin{proof}
We prove this result by reducing from $\GSpin(V)$ to $\SO(V)$. A similar approach has been used in \cite[Proposition 3.1]{KaplanLauLiu2023} and \cite[Theorem 7.1 (i)]{AsgariCogdellShahidi2024}.

Recall that the kernel of the projection map $\pr:\GSpin(V)\to \SO(V)$ is $\ker(\pr)=F^\times$. Since $F^\times\subset H$, the projection induces a homeomorphism
\begin{equation}
\label{eq-prop-integral-converge-2-homeo}
H\backslash \GSpin(V) \cong \pr(H)\backslash \SO(V). 	
\end{equation}
Let $v\in \pi$ and $f\in \pi_s^\prime$, and denote
\begin{equation*}
\mathcal{G}_{\mu, f, v}(g):=\mu(\pi(g)v, \Lambda(f(g))), \qquad g\in \GSpin(V).	
\end{equation*}
By Lemma~\ref{lemma-H-invariance-of-mu}, $\mathcal{G}_{\mu, f, v}$ is left invariant under $H$. Note that $F^\times$ lies in the centers of both $\GSpin(V)$ and $\GSpin(V_0)$. It follows that $\mathcal{G}_{\mu, f, v}$ is invariant under multiplication by $F^\times$. Indeed, for any $z\in F^\times$, we have
\begin{equation}
\label{eq-prop-integral-converge-2-centralinvariant}
\begin{split}
\mathcal{G}_{\mu, f, v}(gz)&=	\mathcal{G}_{\mu, f, v}(zg) \\
&=\mu( \pi(zg)v, \Lambda(f(zg)))\\
&=\mu(\omega_{\pi}(z) \pi(g)v, \omega_{\pi_0}(z) \Lambda(f(g)))  \quad \text{(by \eqref{eq-induced-repn-f(zx)})}\\
&=\omega_\pi(z)\omega_{\pi_0}(z) \mu(\pi(g)v, \Lambda(f(g))) \\
&=\mu(\pi(g)v, \Lambda(f(g))) \\
&= \mathcal{G}_{\mu, f, v}(g)
\end{split}
\end{equation}
where the second to the last equality follows from the assumption that $\omega_{\pi}|_{F^\times} \cdot \omega_{\pi_0}|_{F^\times}$ is the trivial character of $F^\times$ (see Remark~\ref{remark-central-character-compatibility}). 
Thus, the function $\mathcal{G}_{\mu, f, v}$ descends to a function on $\SO(V)$ as follows. For any $\overline{g}\in \SO(V)$, we set 
\begin{equation*}
\overline{\mathcal{G}}_{\mu, f, v}(\overline{g}):=\mathcal{G}_{\mu, f, v}(g),	
\end{equation*}
where $g\in \GSpin(V)$ is any lift of $\overline{g}$. This is well-defined by \eqref{eq-prop-integral-converge-2-centralinvariant}. Moreover, $\overline{\mathcal{G}}_{\mu, f, v}$ is left invariant under $\pr(H)$. Indeed, let $\overline{h}\in \pr(H)$, and choose any lift $h\in H$. Then we have
\begin{equation*}
\overline{\mathcal{G}}_{\mu, f, v}( \overline{h} \overline{g}) =	\overline{\mathcal{G}}_{\mu, f, v}( \overline{hg} ) = \mathcal{G}_{\mu, f, v}(hg) = \mathcal{G}_{\mu, f, v}(g)=\overline{\mathcal{G}}_{\mu, f, v}( \overline{g}).
\end{equation*}
Hence $\overline{\mathcal{G}}_{\mu, f, v}$ is left invariant under $\pr(H)$.

Using the homeomorphism~\eqref{eq-prop-integral-converge-2-homeo}, we may rewrite the local zeta integral $\mathcal{Z}_{\mu}(f,v)$ as 
\begin{equation*}
\begin{split}
\mathcal{Z}_{\mu}(f,v) &= \int_{H\backslash \GSpin(V)} 	\mathcal{G}_{\mu, f, v}(g) dg\\
&= \int_{\pr(H)\backslash \SO(V)} \overline{\mathcal{G}}_{\mu, f, v}(\overline{g}) d\overline{g},
\end{split}
\end{equation*}
where $d\overline{g}$ denotes the quotient measure on $\pr(H)\backslash \SO(V)$ transported from the quotient measure $dg$ on $H\backslash \GSpin(V)$ through the homeomorphism~\eqref{eq-prop-integral-converge-2-homeo}. The absolute convergence of the integral $\int_{\pr(H)\backslash \SO(V)} \overline{\mathcal{G}}_{\mu, f, v}(\overline{g}) d\overline{g}$  and hence of $\mathcal{Z}_\mu(f,v)$, now follows from the absolute convergence of the analogous local integral for special orthogonal groups in \cite[Lemma 4.1]{JiangZhang2014}. Finally, in the archimedean case, the same reduction together with \cite[Proposition 3.4]{JiangSunZhu2010} shows that $\mathcal{Z}_{\mu}(f,v)$ is continuous on the input data in the domain of absolute convergence.
\end{proof}

\subsection{Proof of Proposition~\ref{prop-integral-converge-1}}
\label{subsection-proof-prop1}
The goal of this section is to prove Proposition~\ref{prop-integral-converge-1}, and hence we assume that $\mu\in \Hom_{H}(\pi\otimes \pi_0, \xi)$ is nonzero. By twisting the representation $\sigma$ by a character if necessary, we may assume $s=0$. 

Denote $\overline{P}_r=\{g\in \GSpin(V): gY_r g^{-1}=Y_r\}$. This is the parabolic subgroup of $\GSpin(V)$ opposite to $P_r$ with respect to the decomposition $V=X_r\oplus V_0^\prime\oplus Y_r$. It has a Levi decomposition $\overline{P}_r= (\GL_r \times \GSpin(V_0^\prime))\ltimes  \overline{N}_{P_r}$, where the $\GL_r$-factor is embedded in $\GSpin(V)$ through the lift $\ell_r$ described in Lemma~\ref{lemma-Levi-lifting}, and $\overline{N}_{P_r}$ is the unipotent radical, which is given by
\begin{equation*}
\overline{N}_{P_r}=\{g\in \GSpin(V): \pr(g)y=y \text{ for all }y\in Y_r, \pr(g) w \equiv w \pmod{Y_r} \text{ for all }w\in V_0^\prime \}.	
\end{equation*}
By \cite[Theorem 14.20]{Borel1991}, the multiplication map $P_r \times \overline{N}_{P_r}\to \GSpin(V)$ given by $(p, \overline{n})\mapsto p\overline{n}$ is an $F$-morphic open immersion, and hence the product 
\begin{equation}
\label{eq-product-Pr-overlineNpr}
P_r \overline{N}_{P_r}= \left(N_{P_r} \rtimes (\GSpin(V_0^\prime)\times\GL_r)\right) \overline{N}_{P_r}
\end{equation}
is the open Bruhat cell of $\GSpin(V)$. In particular, it is an open dense subset of $\GSpin(V)$, and its image in $H\backslash \GSpin(V)$ has full measure in $H\backslash \GSpin(V)$. Therefore the integral defining \(\mathcal Z_\mu(f,v)\) may be computed over $H\backslash P_r \overline{N}_{P_r}$. 

Recall that 
\begin{equation*}
H=N_{P_r}\rtimes (\GSpin(V_0)\times Z_r)\subset P_r=	N_{P_r} \rtimes (\GSpin(V_0^\prime)\times \GL_r).
\end{equation*}
It follows that multiplication induces an identification
\begin{equation*}
H\backslash P_r \overline{N}_{P_r} \cong (Z_r\backslash \GL_r)\times (\GSpin(V_0)\backslash \GSpin(V_0^\prime))\times
\overline N_{P_r},	
\end{equation*}
given by $(Z_r m, \GSpin(V_0)g_0^\prime, \overline{n}) \mapsto H \ell_r(m) g_0^\prime \overline{n}$. 
Thus we have
\begin{equation}
\label{eq-sec4-eq0}
\mathcal{Z}_{\mu}(f,v)= \int_{(Z_r\backslash \GL_r)\times (\GSpin(V_0)\backslash \GSpin(V_0^\prime))\times \overline{N}_{P_r}}	\mu(\pi(m g_0^\prime \overline{n} )v, \Lambda(f(m g_0^\prime \overline{n}))) \delta_{P_r}^{-1}(m)dm dg_0^\prime d\overline{n},
\end{equation}
where $\delta_{P_r}$ is the modular character of $P_r$ defined by $p\mapsto |\det(\Ad_{p})|$. We now briefly explain the Jacobian factor in \eqref{eq-sec4-eq0}. Since $\overline{N}_{P_r}$ is unipotent, the $\overline{N}_{P_r}$ part does not contribute to the Jacobian. Moreover, $\delta_{P_r}$ is trivial on the $\GSpin(V_0^\prime)$ part, since $\GSpin(V_0^\prime)$ acts on $\Lie(N_{P_r})$ through the projection to the orthogonal action which has determinant 1. Therefore, the factor $\delta_{P_r}^{-1}(m)$ in the right-hand side of \eqref{eq-sec4-eq0} accounts for the full Jacobian. 
\textbf{Here and in what follows, we identify $\GL_r$ with its image under $\ell_r$ when no confusion can arise.}

 We have the following non-vanishing lemma on $\GSpin(V_0)\backslash \GSpin(V_0^\prime)$.

\begin{lemma}
\label{lemma-sec4-lemma1}
There exist a vector $v\in \pi$ and a smooth function $f_{\pi_0}:\GSpin(V_0^\prime)\to \pi_0$, compactly supported modulo $\GSpin(V_0)$, such that 
\begin{equation*}
f_{\pi_0}(g g_0^\prime)=\pi_0(g)f_{\pi_0}(g_0^\prime), \quad \text{ for all }g\in \GSpin(V_0), \ g_0^\prime\in \GSpin(V^\prime_0),
\end{equation*}
and 
\begin{equation}
\label{eq-lemma-sec4-lemma1-nonvanishing}
\int_{\GSpin(V_0)\backslash \GSpin(V_0^\prime)}  \mu(\pi(g_0^\prime) v, f_{\pi_0}(g_0^\prime))dg_0^\prime\not=0.	
\end{equation}
\end{lemma}

\begin{proof}
Since $\mu\not=0$, we can find $v\in \pi$ and $v_0\in \pi_0$ such that $\mu(v, v_0)=1$. Consider the function $\phi(g_0^\prime):=\mu(\pi(g_0^\prime)v,v_0)$, $g_0^\prime\in \GSpin(V_0^\prime)$. This is continuous, and $\phi(1)=1$. Hence there exists a compact neighborhood $K^\prime$ of the identity in \(\GSpin(V_0^\prime)\) such that $\mathrm{Re}(\phi(h))>0$ for all $h\in K^\prime$. 
Let $\beta:\GSpin(V_0^\prime)\to  \GSpin(V_0)\backslash \GSpin(V_0^\prime)$ be the quotient map. Shrinking $K^\prime$ if necessary, we may assume that $\beta|_{K^\prime}$ is injective, and that there exists an open neighborhood $\Omega$ of the base point $\beta(1)$ in \(\GSpin(V_0)\backslash \GSpin(V_0^\prime)\) together with a smooth section $\widetilde{\beta}:\Omega\to \GSpin(V_0^\prime)$ such that $\widetilde{\beta}(\Omega)\subset K^\prime$. 

Let $\phi_1\in \mathcal{C}_c^\infty(\Omega)$ be a non-negative non-zero smooth function with compact support contained in $\Omega$. 
Define
\begin{equation*}
f_{\pi_0}(g_0^\prime)=
\begin{cases}
\phi_1(x)\,\pi_0(g)v_0, & \text{if } g_0^\prime=g \widetilde{\beta}(x)
\text{ with } g\in \GSpin(V_0),\ x\in \Omega,\\
0, & \text{if } g_0^\prime\notin \GSpin(V_0) \widetilde{\beta}(\Omega).
\end{cases}	
\end{equation*}
Since $\widetilde{\beta}$ is a section of the quotient map, the decomposition $g_0^\prime=g \widetilde{\beta}(x)$ is unique whenever $g_0^\prime\in \GSpin(V_0)\widetilde{\beta}(\Omega)$, so $f_{\pi_0}$ is well-defined. 
By construction, we have
\begin{equation*}
f_{\pi_0}(g g_0^\prime)=\pi_0(g)f_{\pi_0}(g_0^\prime), \quad \text{ for all }g\in \GSpin(V_0), \ g_0^\prime\in \GSpin(V^\prime_0).
\end{equation*}
Moreover, $f_{\pi_0}$ is smooth. Indeed, on the open subset $\GSpin(V_0)\widetilde{\beta}(\Omega)\subset \GSpin(V_0^\prime)$, the map 
\begin{equation*}
\GSpin(V_0)\times \Omega \longrightarrow \GSpin(V_0)\widetilde{\beta}(\Omega),
\qquad
(g,x)\longmapsto g\widetilde{\beta}(x)	
\end{equation*}
is a diffeomorphism in the archimedean case and a homeomorphism onto an open subset in the non-archimedean case. Since $\phi_1$ is smooth and $g\mapsto \pi_0(g)v_0$ is smooth, the function $f_{\pi_0}$ is smooth on the open subset $\GSpin(V_0) \widetilde{\beta}(\Omega)$. Outside $\GSpin(V_0) \widetilde{\beta}(\Omega)$, the function $f_{\pi_0}$ is identically zero. Since $\mathrm{supp}(\phi_1)\subset \Omega$ is compact, the support of $f_{\pi_0}$ is contained in $\GSpin(V_0)\widetilde{\beta}(\mathrm{supp}(\phi_1))$, which is compact modulo $\GSpin(V_0)$, and $f_{\pi_0}$ vanishes near the boundary of $\GSpin(V_0)\widetilde{\beta}(\Omega)$. Hence the extension by zero is smooth on all of $\GSpin(V_0^\prime)$.

Finally, we have
\begin{equation*}
\int_{\GSpin(V_0)\backslash \GSpin(V_0^\prime)}
\mu(\pi(g_0^\prime) v, f_{\pi_0}(g_0^\prime))dg_0^\prime
=
\int_{\Omega}
\phi_1(x)\,\mu(\pi(\widetilde{\beta}(x))v,v_0)dx.	
\end{equation*}
By taking the real parts, we obtain 
\begin{equation*}
\mathrm{Re}\left( \int_{\GSpin(V_0)\backslash \GSpin(V_0^\prime)}
\mu(\pi(g_0^\prime) v, f_{\pi_0}(g_0^\prime))dg_0^\prime\right) = \int_{\Omega}
\phi_1(x) \mathrm{Re} (\phi( \widetilde{\beta}(x) ))dx >0.
\end{equation*}
Thus we have  \eqref{eq-lemma-sec4-lemma1-nonvanishing}.
This finishes the proof of the lemma.
\end{proof}

Take $v\in\pi$ and $f_{\pi_0}$ as in Lemma~\ref{lemma-sec4-lemma1}. For $m\in \GL_{r}$ and $\overline{n}\in \overline{N}_{P_r}$, put
\begin{equation*}
\Phi(m,\overline{n}):=\int_{\GSpin(V_0)\backslash \GSpin(V_0^\prime)} 	\mu(\pi(m g_0^\prime \overline{n} )v,  f_{\pi_0}( g_0^\prime  ) )dg_0^\prime,
\end{equation*}
which is a smooth function on $\GL_r\times \overline{N}_{P_r}$. By Lemma~\ref{lemma-sec4-lemma1}, $\Phi(1,1)\not=0$, and hence $\Phi\not=0$. Note that for any $z\in Z_r$, $m\in \GL_{r}$ and $\overline{n}\in \overline{N}_{P_r}$, we have
\begin{equation*}
\begin{split}
\Phi(zm, \overline{n}) &=	\int_{\GSpin(V_0)\backslash \GSpin(V_0^\prime)} 	\mu(\pi( z m g_0^\prime \overline{n} )v,  f_{\pi_0}( g_0^\prime ))dg_0^\prime \\
&=	\int_{\GSpin(V_0)\backslash \GSpin(V_0^\prime)} 	\mu(\pi( z m g_0^\prime \overline{n} )v,  \pi_0(z)f_{\pi_0}( g_0^\prime ))dg_0^\prime \\
&=\chi(z)\Phi(m, \overline{n}),
\end{split}
\end{equation*}
where the second equality follows from the fact that the representation $\pi_0$ of $H$ has trivial restriction to $Z_r$, and the third equality follows from the fact that $\chi|_{Z_r}=\xi|_{Z_r}$ (see \eqref{eq-xi-chi} and Remark~\ref{remark-chi-xi-compatibility}).

Let $W_r$ be a smooth function on $\GL_r$ with compact support modulo $Z_r$ such that 
\begin{equation*}
W_r(zm)=\chi^{-1}(z)W_r(m), \quad \text{ for all }z\in Z_r, m\in \GL_r.	
\end{equation*}
Then for each $\overline{n}\in \overline{N}_{P_r}$, the function on $\GL_r$ given by $m\mapsto \Phi(m, \overline{n})W_r(m)\delta_{P_r}^{-1}(m)$ is left-invariant under $Z_r$. We remind the reader that $\GL_r=\GL_F(X_r)$ is viewed as a subgroup of $\GL_{r+1}=\GL_F(X_{r+1}^{\prime})$ by acting trivially on $e_{r+1}$, and that $Z_r$ (respectively, $Z_{r+1}$) is the unipotent radical of the Borel subgroup of $\GL_r$ (respectively, $\GL_{r+1}$) stabilizing the flag \eqref{eq-flag-Xr} (respectively, the flag \eqref{eq-flag-Xr+1}), as in Section~\ref{subsection-Group-GSpinVprime}.

\begin{lemma}
\label{lemma-sec4-lemma2}
For every $W_r$ as above, there exist a vector $v_\sigma\in \sigma$ such that 
\begin{equation*}
	W_r(m)=\lambda(\sigma(m)v_\sigma), \quad m\in \GL_r.
\end{equation*}
\end{lemma}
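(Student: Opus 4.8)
The goal is to show that an arbitrary smooth function $W_r$ on $\GL_r$, compactly supported modulo $Z_r$ and transforming by $\chi^{-1}$ under left translation by $Z_r$, can be realized as a coefficient $m \mapsto \lambda(\sigma(m) v_\sigma)$ of the fixed Whittaker functional $\lambda$ for $\sigma$, for a suitable choice of vector $v_\sigma \in \sigma$. In other words, the map $v_\sigma \mapsto (m \mapsto \lambda(\sigma(m)v_\sigma))$ from $\sigma$ into the space of such functions on $\GL_r$ should be surjective onto the compactly-supported-mod-$Z_r$ part.

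\medskip
The plan is to invoke the Kirillov model of the supercuspidal representation $\sigma$ of $\GL_{r+1}$, restricted to the mirabolic subgroup. Here $\GL_r = \GL_F(X_r)$ sits inside $\GL_{r+1} = \GL_F(X_{r+1}^\prime)$ as the stabilizer of the line $Fe_{r+1}$ (equivalently, embedded in the standard way via the lower-right block, or as part of the mirabolic $R$ from Lemma~\ref{lemma-commutative-diagram}), and $\lambda$ is a $\chi^{-1}$-Whittaker functional with respect to the maximal unipotent $Z_{r+1}$ of $\GL_{r+1}$. The key classical fact is that for an irreducible generic representation $\sigma$ of $\GL_{r+1}$, the Whittaker model $\mathcal{W}(\sigma, \chi^{-1})$ restricted to the mirabolic subgroup $P_{r+1} = \GL_r \ltimes F^r$ contains (and for supercuspidal $\sigma$ in the nonarchimedean case, \emph{equals}, after further restriction, via the Kirillov model) the space $\mathrm{ind}_{Z_{r+1}\cap P_{r+1}}^{P_{r+1}}(\chi^{-1})$ of smooth functions on $P_{r+1}$ that are compactly supported modulo $Z_{r+1}\cap P_{r+1}$ and transform by $\chi^{-1}$; this is the Gelfand--Kazhdan / Bernstein--Zelevinsky theory of derivatives in the nonarchimedean case, and its archimedean analogue (the function $W$ restricted to $P_{r+1}$ can be prescribed to have any Schwartz behavior, in particular compactly-supported-mod-unipotent, behavior in the $\GL_r$-radial direction) is available by work of Jacquet--Shalika and the Casselman--Hecht--Miličić / Kostant theory used e.g. in \cite{JiangSunZhu2010}. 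Concretely: given $W_r$ on $\GL_r$ as in the statement, I extend it to a function $W$ on $P_{r+1}$ by declaring it to transform appropriately under the unipotent radical of $P_{r+1}$ against $\chi^{-1}$, check $W$ lies in the (restricted) Kirillov model of $\sigma$, pull it back to a Whittaker function $\widetilde W \in \mathcal{W}(\sigma, \chi^{-1})$, and let $v_\sigma$ be the vector corresponding to $\widetilde W$. Then $\lambda(\sigma(m)v_\sigma) = \widetilde W(m) = W(m) = W_r(m)$ for $m \in \GL_r$, which is exactly the assertion.

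\medskip
I would organize the proof as follows: first, recall the Whittaker (Kirillov) model of $\sigma$ with respect to $\chi^{-1}$ and identify $\lambda$ with evaluation at the identity; second, observe that $\GL_r$, as a subgroup of the mirabolic $P_{r+1}$, acts on the Kirillov model exactly by the natural $\GL_r$-action on functions, so a Whittaker function $\widetilde W$ satisfies $\widetilde W(m) = \lambda(\sigma(m)v_\sigma)$ for $m \in \GL_r$ where $v_\sigma \leftrightarrow \widetilde W$; third, cite the structural result — Bernstein--Zelevinsky for nonarchimedean supercuspidal $\sigma$, giving that the Kirillov model restricted to $P_{r+1}$ is precisely $\mathrm{ind}$ (compact induction) of $\chi^{-1}$, and the corresponding archimedean statement — to produce a Whittaker function extending the prescribed $W_r$; fourth, read off $v_\sigma$. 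The supercuspidality hypothesis on $\sigma$ (already imposed in Section~\ref{subsection-local-integral}) is exactly what makes the nonarchimedean argument clean, since then the Kirillov model on the mirabolic is a single compactly-induced piece with no "Eisensteinian" tail.

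\medskip
The main obstacle is the archimedean case: there the Kirillov model is not literally a compact induction, and one must instead use the more delicate analytic statement that Whittaker functions of $\sigma$, when restricted radially to $\GL_r \subset P_{r+1}$, can be prescribed to be an arbitrary smooth compactly-supported-mod-$Z_r$ function of the desired equivariance type — this rests on the asymptotic analysis of Whittaker functions and the surjectivity of the restriction map onto the appropriate Schwartz-induced space, as developed in \cite{JiangSunZhu2010} (building on \cite{CasselmanHechtMilivciv2000}). Handling the completed-tensor-product / Casselman--Wallach topologies and making sure $v_\sigma$ genuinely lives in $\sigma$ (not merely a completion) is where care is needed, but since we only need \emph{one} vector realizing a \emph{given} $W_r$, and $W_r$ is compactly supported modulo $Z_r$, this falls within the image of the smooth model and no completion issues actually arise.
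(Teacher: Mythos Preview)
Your proposal is correct and follows essentially the same approach as the paper: the paper's proof consists entirely of citations to Bernstein--Zelevinsky (nonarchimedean) and Jacquet--Shalika (archimedean), together with a pointer to Cogdell's lecture notes, and what you have written is precisely an unpacking of the Kirillov-model content behind those citations. In particular your observation that $Z_{r+1}\backslash P_{r+1}\cong Z_r\backslash \GL_r$, so that the compactly-induced space $\mathrm{ind}_{Z_{r+1}}^{P_{r+1}}(\chi^{-1})$ inside the Kirillov model restricts bijectively onto the space of admissible $W_r$, is exactly the mechanism underlying the cited results.
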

\begin{proof}
This is due to Bernstein and Zelevinsky 	\cite{BernsteinZelevinsky1976} when $F$ is non-archimedean,  and Jacquet and Shalika \cite{JacquetShalika1981A} when $F$ is archimedean. See also \cite[Lecture 4]{Cogdell2004}. Note that for any $z\in Z_r$ and $m\in \GL_r$, we have $\lambda(\sigma (zm)v_{\sigma})=\chi^{-1}(z)\lambda(\sigma(m)v_{\sigma})$ by \eqref{eq-lambda-sigma}.
\end{proof}

Let $\phi_2$ be a smooth and compactly supported function on $\overline{N}_{P_r}$. Since $\Phi\not=0$, we can pick $W_r$ and $\phi_2$ such that
\begin{equation}
\label{eq-sec4-eq1}
\int_{(Z_r\backslash \GL_r) \times \overline{N}_{P_r}}	\Phi(m, \overline{n}) W_r(m) \phi_2(\overline{n}) \delta_{P_r}^{-1}(m) dm d\overline{n} \not=0.
\end{equation}

We now define a section $f_\rho\in \pi_s'$. Note that 
\begin{equation*}
	P_{r+1}^\prime\cap \GSpin(V) =N_{P_{r}} \rtimes (\GL_r\times \GSpin(V_0)),
\end{equation*}
and the multiplication map
\begin{equation*}
\begin{split}
	\iota: (N_{P_{r+1}^\prime} \rtimes  (\GL_{r+1} \times \GSpin(V_0^\prime))\ltimes \overline{N}_{P_r} )  &\to \GSpin(V^\prime) \\
	(u,m, g_0^\prime, \overline{n} ) &\mapsto umg_0^\prime \overline{n}
\end{split}
\end{equation*}
is an open embedding. Put
\begin{equation*}
f_{\rho}(x):=
\begin{cases}
	\phi_2(\overline{n})f_{\pi_0}(g_0^\prime)\otimes (\sigma(m) v_\sigma)  &\text{ if }x=\iota(u, m, g_0^\prime, \overline{n}),\\
	0 &\text{ if $x$ is not in the image of $\iota$},
\end{cases}
\end{equation*}
where $v_\sigma\in \sigma$ is as in Lemma~\ref{lemma-sec4-lemma2}. By construction, $f_\rho$ is smooth and compactly supported modulo $P_{r+1}^\prime$. Moreover, it satisfies the equivariance property \eqref{eq-induced-repn-equivariance} as we explain below. 
Let $x=\iota(u,m,g_0^\prime, \overline{n})$ be in the image of $\iota$. For any $u_0\in N_{P_{r+1}^\prime}$, we have
\begin{equation*}
f_{\rho}( u_0 x)=	f_{\rho}(\iota( u_0 u, m, g_0^\prime, \overline{n}))= \phi_2(\overline{n})f_{\pi_0}(g_0^\prime)\otimes (\sigma(m) v_\sigma) =f_{\rho}(x).
\end{equation*}
For any $g_0\in \GSpin(V_0)$, we have
\begin{equation*}
\begin{split}
f_{\rho}( g_0  x) &= f_{\rho}( g_0 u m g_0^\prime \overline{n}) 	 \\
&=f_{\rho} ( (g_0u g_0^{-1})g_0 m g_0^\prime \overline{n})\\
&=f_{\rho} ( (g_0u g_0^{-1})  m  g_0 g_0^\prime \overline{n}) \quad (\text{by Lemma~\ref{lemma-Levi-GL-GSpin-commute}}) \\
&=f_{\rho}(\iota( g_0 u g_0^{-1}, m, g_0g_0^\prime, \overline{n}))\\
&=\rho(g_0) f_{\rho}(x).
\end{split}
\end{equation*}
For any $m_0\in \GL_{r+1}$, we have
\begin{equation*}
\begin{split}
f_{\rho}(m_0 x)  & = f_{\rho} ( m_0 u m g_0^\prime \overline{n}) = f_{\rho}( \iota( m_0 u m_0^{-1}, m_0 m, g_0^\prime, \overline{n}))=\rho(m_0)f_{\rho}(x).	
\end{split}
\end{equation*}
On the other hand, if $x$ is not in the image of $\iota$, then for any $u_0\in N_{P_{r+1}^\prime}$, $g_0\in \GSpin(V_0)$, $m_0\in \GL_{r+1}$, we have that $u_0 g_0 m_0 x$ is not in the image of $\iota$ and hence 
\begin{equation*}
f_{\rho}( u_0 g_0 m_0 x)=f_{\rho}(x)=0.	
\end{equation*}
Thus, $f_\rho\in \pi_s^\prime$ (recall that $s$ is assumed to be zero in this section). For any $m\in \GL_r$, $g_0^\prime\in \GSpin(V_0^\prime), \overline{n}\in \overline{N}_{P_r}$, we have
\begin{equation*}
\Lambda( f_{\rho}(m g_0^\prime \overline{n} ))=\Lambda( f_{\rho}( \iota(1, m, g_0^\prime, \overline{n}) )) = \Lambda( \phi_2(\overline{n})f_{\pi_0}(g_0^\prime)\otimes (\sigma(m) v_\sigma)) = \phi_2(\overline{n}) \lambda( \sigma(m) v_\sigma) f_{\pi_0}(g_0^\prime).
\end{equation*}

We are ready to finish the proof of Proposition~\ref{prop-integral-converge-1}. Starting from \eqref{eq-sec4-eq0}, we compute that
\begin{equation*}
\begin{split}
\mathcal{Z}_{\mu}(f_\rho,v) &= \int_{(Z_r\backslash \GL_r)\times (\GSpin(V_0)\backslash \GSpin(V_0^\prime))\times \overline{N}_{P_r}}	\mu(\pi(m g_0^\prime \overline{n} )v, \Lambda(f_\rho(m g_0^\prime \overline{n}))) \delta_{P_r}^{-1}(m)dm dg_0^\prime d\overline{n}	\\
&= \int_{(Z_r\backslash \GL_r)\times (\GSpin(V_0)\backslash \GSpin(V_0^\prime))\times \overline{N}_{P_r}}  \mu(\pi(m g_0^\prime \overline{n} )v,   \phi_2(\overline{n}) \lambda(\sigma(m)v_\sigma) f_{\pi_0}( g_0^\prime ) ) \delta_{P_r}^{-1}(m)dm dg_0^\prime d\overline{n} \\
&= \int_{(Z_r\backslash \GL_r) \times \overline{N}_{P_r}} \Phi(m,\overline{n}) \phi_2(\overline{n}) W_r(m) \delta_{P_r}^{-1}(m)dm   d\overline{n}
\end{split}
\end{equation*}
which converges to a non-zero number by \eqref{eq-sec4-eq1}. This concludes the proof of Proposition ~\ref{prop-integral-converge-1}.

\subsection{Proof of Theorem~\ref{thm-main}}
\label{subsection-proof-of-thm}
We are now ready to prove Theorem~\ref{thm-main}.

\begin{proof}[Proof of Theorem~\ref{thm-main}]
We are given $\pi, \pi_0$ and a generic character $\xi$ of $H$ as in \eqref{eq-xi-chi} (and hence a unitary generic character $\chi$ of $Z_{r+1}$). We take $\sigma$ to be an irreducible $\chi^{-1}$-generic representation of $\GL_{r+1}$.  For each $\mu\in \Hom_{H}(\pi\otimes \pi_0, \xi)$, we may define the integral $\mathcal{Z}_{\mu}$ as in \eqref{eq-I-mu}.

Let $\mathcal{F}$ be a finite-dimensional subspace of $\Hom_{H}(\pi\otimes \pi_0, \xi)$. By Proposition~\ref{prop-integral-converge-2}, there exists a number $c_{\mathcal{F}}\in \mathbb{R}$ such that for all $\mu\in  \mathcal{F}$ and all $s\in \mathbb{C}$ with $\mathrm{Re}(s)>c_{\mathcal{F}}$, the integral $\mathcal{Z}_{\mu}(f,v)$ converges absolutely for all $f\in \pi_s^\prime$ and all $v\in \pi$, and defines a linear functional on $\pi_s^\prime\otimes \pi$.

By Lemma~\ref{lemma-irreducible}, we may choose one $s$ with  $\mathrm{Re}(s)>c_{\mathcal{F}}$ such that $\pi_s^\prime$ is irreducible. By \eqref{eq-integral-equivariant-property} and Proposition~\ref{prop-integral-converge-1}, we obtain a linear embedding
\begin{equation*}
\mathcal{F}\hookrightarrow \Hom_{\GSpin(V)}(\pi_s^\prime\otimes \pi, \mathbb{C}), \quad \mu\mapsto \mathcal{Z}_{\mu}.	
\end{equation*}
The injectivity follows from Proposition~\ref{prop-integral-converge-1}.
The $\GSpin(V)$-equivariance of $\mathcal{Z}_\mu$ follows from \eqref{eq-integral-equivariant-property}. The linearity of the map $\mu\mapsto \mathcal{Z}_\mu$ follows from the definition of the integral $\mathcal{Z}_\mu$ and Proposition~\ref{prop-integral-converge-2}.
The space $\Hom_{\GSpin(V)}(\pi_s^\prime\otimes \pi, \mathbb{C})$ is at most one-dimensional (see \cite{EmoryTakeda2023} when $F$ is non-archimedean and \cite{EmoryKimMaiti} when $F$ is archimedean). Therefore, $\mathcal{F}$ is at most one-dimensional. Since $\mathcal{F}$ was arbitrary, we conclude that $\Hom_{H}(\pi\otimes \pi_0, \xi)$ is at most one-dimensional. 
\end{proof}

\bibliographystyle{alpha}
\bibliography{References}

\end{document}